\newtheorem{theorem}{Theorem}
\newtheorem{lemma}[subsection]{{\bf Lemma}}
\newtheorem{coro}[subsection]{{\bf Corollary}}
\newcommand{\al}{\alpha}
\newcommand{\om}{\omega}
\newcommand{\lf}{\big{\lfloor}}
\newcommand{\rf}{\big{\rfloor}}
\newcommand{\del}{\delta}
\newcommand{\lc}{\big{\lceil}}
\newcommand{\rc}{\big{\rceil}}
\newcommand{\Z}{\mbox{$\mathbb Z$}}
\begin{document}
\title{Irreducibility of generalized Hermite-Laguerre Polynomials III} 
\author{Shanta Laishram}
\address{Stat-Math Unit, India Statistical Institute\\
7, S. J. S. Sansanwal Marg, New Delhi, 110016, India}
\email{shantalaishram@gmail.com}
\author[T. N. Shorey]{T. N. Shorey}
\address{Department of Mathematics\\
IIT Bombay, Powai, Mumbai 400076, India}
\email{shorey@math.iitb.ac.in}
\thanks{2000 Mathematics Subject Classification: Primary 11A41, 11B25, 11N05, 11N13, 11C08, 11Z05.\\
Keywords: Irreducibility, Hermite-Laguerre Polynomials, Newton Polygons, Arithmetic Progressions, Primes.}
\maketitle
\pagenumbering{arabic}
\pagestyle{headings}

\begin{abstract}
For a positive integer $n$ and a real number $\alpha$, the generalized Laguerre polynomials are defined by
\begin{align*}
L^{(\alpha)}_n(x)=\sum^n_{j=0}\frac{(n+\alpha)(n-1+\alpha)\cdots (j+1+\alpha)(-x)^j}{j!(n-j)!}.
\end{align*}
These orthogonal polynomials are solutions to \emph{Laguerre's Differential Equation} which arises in the
treatment of the harmonic oscillator in quantum mechanics.  Schur studied these Laguerre polynomials for its  interesting algebraic properties. He obtained irreducibility results of $L^{(\pm \frac{1}{2})}_n(x)$ and
 $L^{(\pm \frac{1}{2})}_n(x^2)$ and derived that the Hermite polynomials $H_{2n}(x)$ and
 $\frac{H_{2n+1}(x)}{x}$ are irreducible for each $n$.
In this article, we extend Schur's result by showing that the family of Laguerre polynomials
$L^{(q)}_n(x)$ and $L^{(q)}_n(x^d)$ with $q\in \{\pm \frac{1}{3}, \pm \frac{2}{3}, \pm \frac{1}{4}, \pm \frac{3}{4}\}$, where $d$ is the denominator of $q$, are irreducible for every $n$ except when
$q=\frac{1}{4}, n=2$ where we give the complete factorization. In fact, we derive it from a more general result.
\end{abstract}

\section{Introduction}

For a positive integer $n$ and a real number $\al$, the generalized Laguerre polynomials are
defined by
\begin{align*}
L^{(\al)}_n(x)=\sum^n_{j=0}\frac{(n+\al)(n-1+\al)\cdots (j+1+\al)(-x)^j}{j!(n-j)!}.
\end{align*}
Let $d>1$ be an integer and $q$ be a rational number with denominator equal to $d$ written in its reduced form
$$q=u+\frac{\al}{d}$$
where $u, \al\in \Z$ with $1\le \al <d$ and gcd$(\al, d)=1$. For integers $a_0, a_1, \cdots a_n$, let
\begin{align*}
G(x):=G_q(x)=&\sum^n_{j=0}a_j(n+q)(n-1+q)\cdots (j+1+q)d^{n-j}x^j\\
=&\sum^n_{j=0}a_jx^j\left(\prod^n_{i=j+1}(\al +(u+i)d)\right).
\end{align*}
This is an extension of Hermite polynomials and generalized Laguerre polynomials. In fact, when
$a_j=(-1)^j\binom{n}{j}$, we obtain $d^nn!L^{(q)}_n(\frac{x}{d})$ and Hermite polynomials are given by
\begin{align*}
H_{2n}(x)=(-1)^n2^{2n}n!L^{(-\frac{1}{2})}(x^2) \ {\rm and} \ H_{2n+1}(x)=(-1)^n2^{2n+1}n!xL^{(\frac{1}{2})}(x^2).
\end{align*}
Therefore we call $G(x)$ the generalized Hermite-Laguerre polynomial. We have
\begin{align*}
G(x^d):=G_q(x^d) ={\displaystyle{\sum_{j=0}^{dn}}} b_j x^j \ \
{\rm where} \ \ b_j =\begin{cases}
a_l{\displaystyle{\prod^n_{i=l+1}}} (\al+(u+i)d) \ & {\rm if} \ j = dl\\
0 \ & {\rm otherwise}.
\end{cases}
\end{align*}
We observe that the irreducibility of $G_q(x^d)$ implies the irreducibility of $G_q(x)$. There is
a slight difference in the notation of this paper from that of \cite{stirred}, \cite{GHL1} and \cite{GHL2};
$G_q(x)$ here is $G_{q+1}(x)$ in the above papers. The first result on the irreducibility of these
polynomials is due to Schur. Schur \cite{schur} proved
that $G_{-\frac{1}{2}}(x^2)$ with $a_n=\pm 1$ and $a_0=\pm 1$ are irreducible and this implies
the irreducibility of Hermite poynomial $H_{2n}$. Schur \cite{schur1} also established the irreducibility
of $\frac{H_{2n+1}(x)}{x}$ by showing that $G_{\frac{1}{2}}(x^2)$ with $a_n=\pm 1$ and $a_0=\pm 1$ is irreducible
except for $n=12$ where it may have a quadratic factor.
In this paper, we extend Schur's result by proving 

\begin{theorem}\label{Lag1/3}
Let $q\in \{\pm \frac{1}{3}, \pm \frac{2}{3}, \pm \frac{1}{4}, \pm \frac{3}{4}\}$. The Laguerre polynomials
$L^{(q)}_n(x)$ and $L^{(q)}_n(x^d)$, where $d$ is the denominator of $q$, are irreducible for every $n$
except when $q=\frac{1}{4}, n=2$ where
\begin{align*}
L^{(\frac{1}{4})}_2(x)=\frac{1}{32}(4x-3)(4x-15) \ {\rm and} \ L^{(\frac{1}{4})}_2(x^4)=\frac{1}{32}(4x^4-3)(4x^4-15).
\end{align*}
\end{theorem}

In fact we derive Theorem \ref{Lag1/3} from the following general result extending the theorems of
\cite{GHL1} and \cite{GHL2}. For a non-zero integer $m$, we denote by $P(m)$ the greatest prime
divisor of $m$ with the convention $P(\pm 1)=1$. Observe that if a polynomial of degree $m$ has a
factor of degree $k<m$, then it has a co-factor of degree $m-k$. Therefore when we consider a factor
of a polynomial of degree $m$, we always mean the factor whose degree is $\le \frac{m}{2}$.

\begin{theorem}\label{1/3}
Let $q\in \{\pm \frac{1}{3}, \pm \frac{2}{3}\}$. Assume that $P(a_0a_n)\le 3$ and further
$2\nmid a_0a_n$ if $\al+3(n+u)$ is a power of $2$. Then the polynomials $G(x)$ and $G(x^3)$
with $q\in \{-\frac{1}{3}, -\frac{2}{3}\}$ are both irreducible except when $q=-\frac{2}{3}, n=2$
where $G(x)$ may have a linear factor and $G(x^3)$ may have a cubic factor or when
$q=-\frac{1}{3}, n=43$ where $G(x^3)$ may have a factor of degree $5$. Further the polynomials
$G(x)$ and $G(x^3)$ with $q\in \{\frac{1}{3}, \frac{2}{3}\}$ are both irreducible except possibly
when
\begin{itemize}
\item[$(i)$]  $1+3n=2^a$ where $G_{\frac{1}{3}}(x)$ may have a linear factor and
$G_{\frac{1}{3}}(x^3)$ may  have a quadratic or a cubic factor.
\item[$(ii)$]  $2+3n=2^a$ and $n\neq 42$ where $G_{\frac{2}{3}}(x^3)$ may  have a quadratic factor.
\item[$(iii)$]  $2+3n=2^b5^c, b\ge 0, c>0$ where $G_{\frac{2}{3}}(x)$ may have a linear factor and
$G_{\frac{2}{3}}(x^3)$ may  have a cubic factor.
\item[$(iv)$]  $n=42$ where $G_{\frac{2}{3}}(x)$ may have a quadratic factor and
$G_{\frac{2}{3}}(x^3)$ may  have a factor of degree in $\{2, 4, 5, 6\}$.
\end{itemize}
\end{theorem}

\begin{theorem}\label{1/4}
Let $q\in \{\pm \frac{1}{4}, \pm \frac{3}{4}\}$. Assume that $P(a_0a_n)\le 3$ and further $P(a_0a_n)\le 2$ if
$\al+4(n+u)$ is a power of $3$ when $q\in \{-\frac{1}{4}, -\frac{3}{4}\}$ and
$3|(\al+4n)$ when $q\in \{\frac{1}{4}, \frac{3}{4}\}$. Then the polynomials $G_{-\frac{3}{4}}(x)$ and
$G_{-\frac{3}{4}}(x^4)$ are both irreducible. Further
$G_{\pm \frac{1}{4}}(x), G_{\pm \frac{1}{4}}(x^4), G_{\frac{3}{4}}(x)$ and
$G_{\frac{3}{4}}(x^4)$ are irreducible except possibly when $3+4(n-1)=3^a$ if $q=-\frac{1}{4}$;
$1+4n=3^b5^c, b, c\ge 0, b+c>0$ if $q=\frac{1}{4}$ and $3+4n=7^y$ if $q=\frac{3}{4}$ where $G_q(x)$ may
have a linear factor and $G_q(x^4)$ may have a factor of degree $4$.
\end{theorem}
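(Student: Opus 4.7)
The plan is to mirror the Newton polygon framework underlying Theorems A and B, now at denominator $d=4$ and with both signs of $q$ handled in parallel. Since irreducibility of $G_q(x^4)$ implies irreducibility of $G_q(x)$, and since the stated exception permits a factor of degree $4$ in $G_q(x^4)$ (equivalently a linear factor of $G_q(x)$), the task reduces to ruling out a proper factor of $G_q(x^4) = \sum_l b_{4l} x^{4l}$ of every degree $k$ with $1 \le k \le 2n$ and $k \ne 4$, under the stated hypotheses on $a_0 a_n$.

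First I would set up the $p$-adic Newton polygon of $G_q(x^4)$ at each prime $p \ge 5$. For such a $p$ the indices $i \in \{1, \dots, n\}$ with $p \mid \alpha + 4(u+i)$ form an arithmetic progression modulo $p$, so
\[
v_p(b_{4l}) = v_p(a_l) + \sum_{i=l+1}^{n} v_p(\alpha + 4(u+i))
\]
has a controlled piecewise-linear structure. A factor of degree $k$ contributes a length-$k$ sub-polygon at every prime. The key dichotomy, following the arguments developed in \cite{GHL1,GHL2}, is that either a prime $p > k$ divides some $\alpha + 4(u+i)$, producing a slope incompatible with any length-$k$ sub-polygon of the full Newton polygon, or every prime divisor of $\prod_{i=1}^{n}(\alpha + 4(u+i))$ is at most $k$. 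In the latter case, Sylvester--Erd\H{o}s type lower bounds on the greatest prime factor of a product of integers in an arithmetic progression of common difference $4$ push $n$ into a finite range that can be dispatched by direct computation.

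Next I would address the primes $2$ and $3$ separately, which is where the relaxed coefficient hypothesis $P(a_0 a_n) \le 3$ comes in. Only $p \in \{2,3\}$ can perturb the endpoint vertices of the Newton polygon, and among these $p = 3$ is the critical one: when $\alpha + 4(n+u)$ is itself a power of $3$ the $3$-adic vertex structure shifts in a way that a $3$-divisibility contribution from $a_0 a_n$ could manufacture a length-$k$ sub-polygon that would otherwise be forbidden, which is exactly why the strengthened hypothesis $P(a_0 a_n) \le 2$ is needed in that sub-case. For $q = -3/4$ the leading term $\alpha + 4(n+u) = 4n$ is never the type of prime power that concentrates the prime content of the product, and the arithmetic of the progression $\{4i - 1\}_{i=1}^{n}$ admits no exceptional $n$, which explains why $G_{-3/4}$ is unconditionally irreducible.

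Finally, for the remaining three values of $q$ the exceptional families arise exactly when the leading term of the relevant arithmetic progression absorbs almost all of a small prime's contribution: the primes $3$ and $5$ for $q = 1/4$ with $4n+1 = 3^b 5^c$, the prime $7$ for $q = 3/4$ with $4n+3 = 7^y$, and the prime $3$ for $q = -1/4$ with $4n-1 = 3^a$. In each such exceptional $n$ the Newton polygon admits a single sub-polygon of length $4$ and nothing else, matching the permitted linear factor in $G_q(x)$. The main obstacle I anticipate is keeping the analysis uniform across the four values of $q$ while ensuring that allowing $P(a_0 a_n) = 3$ does not generate spurious small-degree factors through the primes $2$ and $3$; cleanly isolating the sub-case \emph{$\alpha + 4(n+u)$ is a power of $3$} under the stronger hypothesis $P(a_0 a_n) \le 2$, and combining that with the dichotomy above for primes $p \ge 5$, is the key maneuver that closes the argument.
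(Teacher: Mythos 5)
Your overall strategy -- Newton polygons at primes dividing the terms $\alpha+4(u+i)$, combined with lower bounds for the greatest prime factor of products in arithmetic progressions with difference $4$ -- is indeed the paper's approach, but the plan has one concrete error and omits two steps without which the argument does not close. The error is your explanation of why $G_{-3/4}$ is unconditionally irreducible: for $q=-\tfrac34$ one has $u=-1$, $\alpha=1$, so the leading term is $\alpha+4(u+n)=4n-3$, not $4n$, and it certainly can be a power of $3$ (e.g.\ $n=3,21$). The genuine reason is that for $(u,\alpha)=(-1,1)$ the minimal index $j_0$ with $3\mid \alpha+4(u+j)$ is $j_0=3$, so one can run an explicit $3$-adic slope estimate (ord$_3(\Delta_j)<j$ for all $j$) and exclude even the degree-$\delta$ factor; whereas for $q=-\tfrac14$, i.e.\ $(u,\alpha)=(-1,3)$, the bottom term $\alpha+(u+1)d=3$ is already divisible by $3$, the slope argument fails at the first step, and that is precisely where the exception $4n-1=3^a$ comes from. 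Your plan contains no mechanism that distinguishes these two cases; "the arithmetic of the progression admits no exceptional $n$" is false as stated.

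The two omissions: first, reducing to "rule out every degree $k\neq 4$" is not enough, because in the exceptional cases ($1+4n=3^b5^c$, $3+4n=7^y$, $4n-1=3^a$) the theorem still forbids factors of $G_q(x^4)$ of degrees $1,2,3$. Excluding these requires the observation that for a prime $p\mid \alpha+4(u+n)$ every vertex of $NP_p(G_q(x^4))$ has abscissa divisible by $4$, so by Dumas' theorem every factor has degree divisible by $4$; your plan never produces this. Second, the quantitative core of the dichotomy is not a routine Sylvester--Erd\H{o}s appeal: what is needed is exactly $P\bigl(m(m+4)\cdots(m+4(k-1))\bigr)>4(k+1)$ for odd $m>4k$, with the full list of exceptions ($k=2$, $m\in\{11,21,45,77,121\}$; $k=3$, $m=117$) determined, since the Newton-polygon criterion demands a prime exceeding $4(k+1)$ (not $4k$) when $u=0$. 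This is a separate theorem in the paper with a multi-stage proof (Lehmer/Najman tables, prime-gap computations up to $1.1\cdot 10^7$, and analytic counting for large $k$), and each listed exception must then be handled individually by the finer slope criterion. Without supplying this result and its exception list, the "finite range dispatched by computation" step is not actually available.
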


It follows from Theorem \ref{1/4} that if $n$ is a multiple of $3$, then $G_q(x^4)$ is irreducible for
$q\in \{\pm \frac{1}{4}, \pm \frac{3}{4}\}$. In Theorem \ref{1/3}, the case $q=-\frac{2}{3}, n=2$ is necessary since $G_q(x)=(x+2)^2$ and $G_q(x)=(x^3+2)^2$ when $a_0=a_1=a_2=1$.  The assumptions on $a_0a_n$ in Theorems \ref{1/3} and \ref{1/4} are satisfied if $|a_0|=|a_n|=1$; in fact the assumptions of Theorem \ref{1/4} are satisfied if $P(a_0a_n)\le 2$.  Therefore the assertions of Theorems are valid whenever $|a_0|=|a_n|=1$ and further for Theorem \ref{1/4} whenever $P(a_0a_n)\le 2$. 
We believe that for suitable choices of $a_j$'s, many of the polynomials $G_q(x)$ with conditions given in Theorems \ref{1/3} and \ref{1/4} will have linear factor or $G(x^d)$ will have a factor of degree $\leq d$ but we have not found out examples for the same. It will be interesting to either give such examples or prove irreducibility completely for those cases. 

The proofs of Theorems \ref{1/3} and \ref{1/4} are given in Sections $5-7$.
Further we prove Theorem \ref{Lag1/3} in Section $8$. The following result used in
the proof of Theorem \ref{1/4} is also of independent interest.
\begin{theorem}\label{>4k+1}
Let $k\geq 2, n>4k$ and $2\nmid n$. Then
\begin{align}\label{P>4k+1}
P(n(n+4)\cdots (n+4(k-1)))>4(k+1)
\end{align}
unless $k=2, n\in \{11, 21, 45, 77, 121\}$ and $k=3, n=117$.
\end{theorem}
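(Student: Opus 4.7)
The strategy I would pursue is a proof by contradiction that combines analytic $p$-adic estimates for moderately large $k$ with explicit enumeration for the small values where the exceptions live.

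Suppose, for contradiction, $P(\Delta)\le 4(k+1)$ where $\Delta = n(n+4)\cdots(n+4(k-1))$. The first observation is structural: since $n$ is odd and $\gcd(4,p)=1$ for every odd prime $p$, each term $n+4i$ is odd and has all its prime factors in the set of odd primes $\le 4(k+1)$. For any odd prime $p>k$, the residues $n+4i\pmod{p}$ ($0\le i\le k-1$) are pairwise distinct (as $4$ is a unit mod $p$ and $k\le p$), so $p$ divides at most one term. Write $n+4i = A_iB_i$ with $A_i$ composed of primes $\le k$ and $B_i$ of primes in $(k,4(k+1)]$; by the previous observation the $B_i$'s are pairwise coprime, so $\prod_i B_i = \prod_{k<p\le 4(k+1)} p^{v_p(\Delta)}$ with each prime power dividing a single $n+4i\le n+4(k-1)$.

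Next I would derive the two key bounds. For an odd prime $p\le k$, the number of $i$ with $p^j\mid n+4i$ is at most $\lceil k/p^j\rceil$, yielding $v_p(\Delta)\le k/(p-1)+\log_p(n+4k-4)$; summing $\log p$ times these valuations over odd primes $p\le k$ and invoking Mertens' estimate $\sum_{p\le k}\log p/(p-1)=\log k + O(1)$ bounds $\prod_i A_i$. For primes $k<p\le 4(k+1)$ we simply use $p^{v_p(\Delta)}\le n+4k-4$, so $\prod_i B_i \le (n+4k-4)^{r}$ with $r\le \pi(4(k+1))-\pi(k)$. Combined with the lower bound $\Delta\ge n^k>(4k)^k$, this produces an inequality that fails whenever $n$ is sufficiently large relative to $k$, forcing $n=O(k)$ for $k\ge k_0$ at some explicit threshold $k_0$.

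To close the remaining range $4k<n\le Ck$, I would exploit that many indices $i$ must satisfy $B_i=1$: indeed $|\{i:B_i>1\}|\le \pi(4(k+1))-\pi(k)$, so at least $k-(\pi(4(k+1))-\pi(k))$ of the terms $n+4i$ are entirely $k$-smooth odd integers lying in the short interval $(n-1,n+4k-3]$. Bounding the number of odd $k$-smooth integers in this interval (either by the trivial count $\prod_{p\le k,\,p\,\text{odd}}(\lfloor\log_p(n+4k)\rfloor+1)$ or a sharper sieve estimate) and comparing with the required count produces a contradiction for all $k\ge k_0$ throughout the remaining range.

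Finally, for $k$ below the threshold, in particular $k=2,3$, the hypothesis reduces to enumerating odd $n$ such that each of the (two or three) terms is smooth with respect to a fixed small set of primes — $\{3,5,7,11\}$ for $k=2$ and $\{3,5,7,11,13\}$ for $k=3$. This is a finite computation (or an appeal to results on $S$-unit equations), yielding precisely the listed exceptions $n\in\{11,21,45,77,121\}$ for $k=2$ (each giving a smooth pair such as $11\cdot 15$ or $121\cdot 125$) and $n=117$ for $k=3$ (with $117\cdot 121\cdot 125 = 3^2\cdot 13\cdot 11^2\cdot 5^3$), and no exceptions for $4\le k<k_0$. The main obstacle is keeping $k_0$ small enough for the enumeration to be tractable: both sides of the central inequality grow like $k\log k$, so the margin is delicate, and the finer sieve bound on $k$-smooth numbers in short intervals (together with the pairwise coprimality of the $B_i$) is what has to do the decisive work.
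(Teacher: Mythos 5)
Your skeleton is the right one and is essentially the paper's: argue by contradiction, note that each prime $p>k$ divides at most one term $n+4i$, bound the contribution of small primes by counting multiples in the arithmetic progression, compare with $\Delta\ge n^k$, and enumerate small cases. The genuine gap is in the middle range, and it is not a matter of bookkeeping. First, your central inequality does not ``force $n=O(k)$ for $k\ge k_0$ at some explicit threshold'': with $r=\pi(4(k+1))$ the exponent comparison gives roughly $n\le (Ck)^{k/(k-r)}$, which is vacuous for $k\lesssim 30$ (where $\pi(4k+4)\ge k$) and for $k$ in the low hundreds still only yields bounds of size $10^{7}$--$10^{12}$, nowhere near $Ck$. (The paper's sharper version of this inequality, \eqref{Lbd}, which first removes for each small prime the term of maximal $p$-adic valuation so that the remaining product divides essentially $(k-1)!$, still only achieves $n<1.1\cdot 10^{7}$ for $k\le 400$, and the purely analytic contradiction is reached only for $k>400$ after invoking Lemma \ref{4.3} to force $n\ge 552k$.) Second, your proposed way to close the range $4k<n\le Ck$ --- counting odd $k$-smooth integers in the interval by $\prod_{p\le k}(\lfloor\log_p(n+4k)\rfloor+1)$ --- is a count of \emph{all} $k$-smooth integers up to $n+4k$, which is at least $2^{\pi(k)}$ and hence far exceeds the required $k-(\pi(4k+4)-\pi(k))$; it cannot produce a contradiction, and no standard short-interval smooth-number estimate is strong enough here. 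What actually does the decisive work in this range is the Erd\H{o}s removal trick leading to \eqref{Lbd}, not a smooth-number count.

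Third, the claim that $4\le k<k_0$ is ``a finite computation'' is unsupported as stated: without an a priori bound on $n$ there is nothing finite to compute. The missing mechanism, which is the heart of the paper's proof for $3\le k\le 66$, is a pigeonhole step: since at most $\pi(4k+3)-\pi(M)$ of the $k$ terms carry a prime in $(M,4k+3]$, for suitable $M\in\{31,100\}$ more than $\lceil k/2\rceil$ terms are $M$-smooth, hence two \emph{adjacent} terms are, so some $m=n+4i$ satisfies $P(m(m+4))\le M$. By St\o rmer--Lehmer theory (equivalently, finitely many solutions of the associated Pell equations $x^2-4=$ smooth) the set $S_M$ is finite and explicitly tabulated by Lehmer and Najman; checking neighbourhoods of each $m\in S_M$ yields exactly the exceptions $k=2$, $n\in\{11,21,45,77,121\}$ and $k=3$, $n=117$. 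For $67\le k\le 400$ the paper additionally needs computed maximal gaps between consecutive primes $\equiv 1,3\pmod 4$ up to $1.1\cdot10^{7}$ (Lemma \ref{diff}), since $P(\Delta)<n$ forces a prime-free interval $(n-4,n+4k)$ in the relevant progression. You correctly sense that ``the margin is delicate,'' but the St\o rmer-type finiteness for pairs and the prime-gap computation are the two tools your proposal lacks, and without them the argument does not close.
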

As an immediate consequence of Theorem \ref{>4k+1}, we obtain
\begin{coro}\label{>4k}
Let $k\geq 2, n>4k$ and $2\nmid n$. Then
\begin{align}\label{P>4k}
P(n(n+4)\cdots (n+4(k-1)))>4k
\end{align}
unless $k=2, n\in \{21, 45\}$.
\end{coro}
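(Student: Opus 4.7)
The plan is to derive Corollary \ref{>4k} directly from Theorem \ref{>4k+1}. Since $P > 4(k+1)$ certainly implies $P > 4k$, the desired inequality $P(n(n+4)\cdots(n+4(k-1))) > 4k$ holds automatically for every pair $(k,n)$ with $k\ge 2$, $n > 4k$, $2\nmid n$ outside the exceptional list of Theorem \ref{>4k+1}, namely
$(k,n) \in \{(2,11),(2,21),(2,45),(2,77),(2,121),(3,117)\}$.
All that remains is to inspect these six cases and retain exactly those that also violate $P > 4k$.

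The inspection is a short hand calculation. For $k=2$ the threshold is $4k = 8$ and the product is $n(n+4)$: the values $n=21$ (with $21\cdot 25 = 3\cdot 5^2\cdot 7$, so $P=7$) and $n=45$ (with $45\cdot 49 = 3^2\cdot 5\cdot 7^2$, so $P=7$) produce greatest prime factor $\le 8$, whereas the remaining values $n=11,77,121$ give $P=11$ (from the factorizations $3\cdot 5\cdot 11$, $3^4\cdot 7\cdot 11$, and $5^3\cdot 11^2$ respectively). For $(k,n)=(3,117)$ the threshold is $12$ and the product factors as $3^2\cdot 5^3\cdot 11^2\cdot 13$, giving $P=13>12$. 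Hence exactly $(k,n)=(2,21)$ and $(k,n)=(2,45)$ survive as genuine exceptions, which matches the statement of the corollary.

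There is no real obstacle here: the substantive content is Theorem \ref{>4k+1}, and Corollary \ref{>4k} is simply a weaker-threshold restatement that, by absorbing four of the six exceptional pairs into the main conclusion, reduces the exception list to the two pairs $\{(2,21),(2,45)\}$ that will be convenient in the proof of Theorem \ref{1/4}.
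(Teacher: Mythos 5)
Your proposal is correct and is exactly the argument the paper intends: the corollary is stated as an immediate consequence of Theorem \ref{>4k+1}, and one simply checks the six exceptional pairs against the weaker threshold $4k$, finding that only $(k,n)=(2,21)$ and $(2,45)$ (each with greatest prime factor $7\le 8$) remain. Your factorizations are all accurate, so nothing further is needed.
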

We give a proof of Theorem \ref{>4k+1} in Section $4$. In Section $2$, we give some preliminaries and
in Section $3$, we give statements and results on Newton polygons.

The proof of Theorems \ref{Lag1/3}-\ref{1/4} involve combinations of ideas of $p-$adic Newton polygons
with estimates on the greatest prime factor of a product of consecutive terms of an arithmetic progression.
The new ingredients in the paper are Theorem \ref{>4k+1} and the exploitation of arithmetic properties of
some special numbers arising out of application of Newton polygon ideas  and extending the arguments for
$G_q(x)$ to $G_q(x^d)$ where $d$ is the denominator of $q$.

\section{Preliminaries}

For positive integers $m, d, k$, we write
\begin{align*}
\Delta(m, d, k)=m(m+d)\cdots (m+d(k-1)).
\end{align*}
Recall that for an integer $m>1$, we denote by $P(m)$ the greatest prime factor of $m$ and we put
$P(1)=1$. The following result is \cite[Theorem 3]{GHL1}.

\begin{lemma}\label{d3}
Let $k\ge 2$ and $d=3$. Let $m$ and $k$ be positive integers such that $3\nmid m$ and
$m>3k$. Then
\begin{align}\label{d>3k}
P(\Delta(m, 3, k))>3k  \ \ {\rm unless} \ \ (m, k)=(125, 2).
\end{align}
\end{lemma}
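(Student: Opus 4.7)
The plan is to assume for contradiction that $P(\Delta)\le 3k$, where $\Delta := \prod_{i=0}^{k-1}(m+3i)$, and derive a contradiction by comparing a lower bound $\Delta \ge m^k > (3k)^k$ with an upper bound forced by the smoothness hypothesis. Since $3 \nmid m$, none of the terms $m+3i$ is divisible by $3$, so every prime factor of $\Delta$ lies in the set $\{p \text{ prime} : p \le 3k,\; p \ne 3\}$.

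The key observation is a Sylvester--Erd\H{o}s-type separation into ``small'' and ``medium'' primes. For any prime $p$ with $k < p \le 3k$ and $p \ne 3$, if $p$ divided both $m+3i$ and $m+3j$ with $i \ne j$, then $p \mid 3(i-j)$, and since $p \ne 3$ this forces $p \le |i-j| \le k-1$, contradicting $p > k$. So each such medium prime divides at most one term, and hence $p^{v_p(\Delta)} \le m + 3(k-1)$. For each small prime $p \le k$, $p\ne 3$, the standard arithmetic-progression valuation bound yields
\[
v_p(\Delta) \le \frac{k}{p-1} + \log_p\!\bigl(m + 3(k-1)\bigr).
\]

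Multiplying these estimates and taking logarithms, I would obtain
\[
\log \Delta \;\le\; k \sum_{\substack{p \le k \\ p \ne 3}} \frac{\log p}{p-1} \;+\; \pi(3k)\,\log\!\bigl(m + 3(k-1)\bigr).
\]
By Mertens' theorem $\sum_{p \le x}(\log p)/(p-1) = \log x + O(1)$ and the Rosser--Schoenfeld bound $\pi(3k) = O(k/\log k)$, the right-hand side is $k\log k + O(k) + O(k\log m/\log k)$. Comparing with $\log \Delta \ge k\log m > k\log(3k)$, rearrangement yields a contradiction once $k$ exceeds an explicit threshold $K_0$; using sharp explicit forms of Mertens and Chebyshev bounds, $K_0$ can be kept modest.

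The main obstacle is then the finite verification for $2 \le k \le K_0$, since the asymptotic bounds are too crude in that range. For each such $k$ one must enumerate all integers $m > 3k$ with $3 \nmid m$ for which every term $m, m+3, \ldots, m+3(k-1)$ is simultaneously $3k$-smooth. For $k=2$ this reduces to listing $m > 6$ with $3 \nmid m$ such that both $m$ and $m+3$ are $\{2,5\}$-smooth (since the allowed primes are $\{2,5\}$); this is a finite search bounded by Størmer's theorem on consecutive smooth numbers, and the only solution turning up is $m = 125$, giving $m(m+3) = 5^3 \cdot 2^7$ with $P = 5 < 6$. For $3 \le k \le K_0$, the simultaneous smoothness of $k$ terms in an arithmetic progression is far more restrictive, and a similar enumeration (reducible to finitely many $S$-unit equations) confirms that no further exceptional $(m,k)$ arise, completing the proof.
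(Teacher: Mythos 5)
There is a genuine gap, and it sits at the heart of your large-$k$ step. First, for the record: the paper does not prove this lemma at all --- it is quoted as \cite[Theorem 3]{GHL1} --- so the relevant comparison is with the method that paper (and the present paper's own proof of the analogous $d=4$ statement, Theorem \ref{>4k+1}) actually uses. Your Sylvester--Erd\H{o}s counting argument does \emph{not} ``yield a contradiction once $k$ exceeds an explicit threshold $K_0$'' when $m$ is close to $3k$, which is exactly the regime the hypothesis $m>3k$ permits. Concretely, your upper bound gives
$k\log m \le k\sum_{p\le k,\,p\ne 3}\tfrac{\log p}{p-1}+\pi(3k)\log(m+3(k-1))$, and at $m=3k$ the left side is $k\log k+k\log 3$ while the right side is about $k\log k-1.9k+\pi(3k)\log(6k)$; since $\pi(3k)\log(6k)>\tfrac{3k}{\log 3k}\log(6k)>3k>k\log 3+1.9k$, the inequality is \emph{satisfied}, not violated. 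The contradiction only appears once $m/k$ exceeds some constant visibly larger than $3$, so for every large $k$ there remains an untreated window $3k<m\le Ck$. This is not a matter of sharpening Mertens or Rosser--Schoenfeld constants: the two sides genuinely cross near $m\asymp 3k$, which is why results of the form $P(\Delta)>dk$ for $m>dk$ are hard. The paper's proof of the $d=4$ analogue makes this structure explicit: the counting argument (their inequality \eqref{Lbd} and the final display) is invoked only after establishing $n\ge 138\cdot 4k$, and the intermediate range is disposed of by entirely different means --- the result that $P(\Delta(n,d,k))\ge n$ for $n\le C\cdot dk$ (Lemma \ref{4.3}) together with explicit computations on gaps between consecutive primes in residue classes (Lemma \ref{diff}). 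Your proposal contains no substitute for either ingredient.

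A secondary, lesser issue: for $3\le k\le K_0$ you assert that the enumeration of exceptional $m$ ``reduces to finitely many $S$-unit equations.'' In principle two adjacent smooth terms do reduce to St\o rmer-type Pell equations, but you have not explained how to locate, among the $k$ terms, a pair lying in a precomputed smooth set (the paper does this by a pigeonhole count $k-\pi(4k+3)+\pi(31)>\lceil k/2\rceil$ against Lehmer's and Najman's tables $S_{31}$, $S_{100}$), nor how to propagate from that pair to the full product. Your $k=2$ case is fine ($\{2,5\}$-smooth pairs $m,m+3$ give only $m=125$), but as written the argument for $3\le k\le K_0$ and, more seriously, for large $k$ with $m$ near $3k$, is not a proof.
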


For a prime $p$ and a nonzero integer $r$, we define $\nu(r) = \nu_p(r)$
to be the nonnegative integer such that $p^{\nu(r)}|r$ and $p^{\nu(r)+1}\nmid r$.
We define $\nu(0) = +\infty$.
The following classical result is due to Legendre. See for example,
Hasse \cite[Ch. 17, no. 3, p. 263]{hasse}.
\begin{lemma}\label{m!}
Let $p$ be a prime. For any integer $m\ge 1$, write $m$ in base $p$ as
\begin{align*}
m=m_tp^t+m_{t-1}p^{t-1}+\cdots +m_1p+m_0
\end{align*}
where $0\le m_i\le p-1$ for $0\le i\le t$. Then
\begin{align*}
\nu_p(m!)=\frac{m-s_p(m)}{p-1}
\end{align*}
where $s_p(m)=m_t+m_{t-1}+\cdots +m_1+m_0$ is the sum of digits of $m$ in base $p$. In
particular $\nu_p(m!)\le \frac{m-1}{p-1}$ since $s_p(m)\ge 1$.
\end{lemma}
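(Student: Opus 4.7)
The plan is to prove this classical Legendre--de Polignac formula in two steps: first establish that ${\rm ord}_p(m!) = \sum_{i\ge 1}\lfloor m/p^i\rfloor$, then evaluate this sum using the base-$p$ expansion of $m$.

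For the first step, I would argue by counting contributions. Among $1,2,\dots,m$, the number of multiples of $p^i$ is exactly $\lfloor m/p^i\rfloor$. An integer $n\le m$ with ${\rm ord}_p(n)=e$ is counted in $\lfloor m/p^i\rfloor$ precisely for $i=1,2,\dots,e$, contributing $e$ to the total. Summing over $i$ therefore rebuilds the $p$-adic valuation of each factor of $m!$:
\begin{align*}
\sum_{i\ge 1}\lfloor m/p^i\rfloor \;=\; \sum_{n=1}^{m}{\rm ord}_p(n) \;=\; {\rm ord}_p(m!).
\end{align*}

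For the second step, given $m=\sum_{j=0}^{t}m_jp^j$ with $0\le m_j\le p-1$, one reads off $\lfloor m/p^i\rfloor = \sum_{j=i}^{t}m_jp^{j-i}$ for $1\le i\le t$ (and $0$ for $i>t$), since division by $p^i$ simply discards the lowest $i$ digits. Swapping the order of summation and summing the inner geometric series gives
\begin{align*}
\sum_{i=1}^{t}\lfloor m/p^i\rfloor \;=\; \sum_{j=1}^{t}m_j\sum_{i=1}^{j}p^{j-i} \;=\; \sum_{j=1}^{t}m_j\cdot\frac{p^j-1}{p-1} \;=\; \frac{1}{p-1}\left(\sum_{j=1}^{t}m_jp^j-\sum_{j=1}^{t}m_j\right).
\end{align*}
The first inner sum equals $m-m_0$ and the second equals $s_p(m)-m_0$, so the $m_0$ terms cancel, leaving $(m-s_p(m))/(p-1)$, which is exactly the claimed formula.

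Since the identity is classical and entirely elementary, there is no real obstacle; the only point requiring care is the index bookkeeping in the double sum and noticing that the ``missing'' constant digit $m_0$ is subtracted off from both $m$ and $s_p(m)$ and thus cancels. An alternative, essentially equivalent, route is to proceed by strong induction on $t$ using the identity $\lfloor m/p\rfloor! \cdot p^{\lfloor m/p\rfloor}$ divides $m!$, but the direct computation above seems cleaner and matches the digit-sum form of the stated conclusion most transparently.
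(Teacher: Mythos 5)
Your proof is correct and complete: the counting argument for $\mathrm{ord}_p(m!)=\sum_{i\ge 1}\lfloor m/p^i\rfloor$ is sound, the digit-truncation identity $\lfloor m/p^i\rfloor=\sum_{j=i}^{t}m_jp^{j-i}$ is right, and the double-sum manipulation (including the cancellation of the $m_0$ terms) checks out. Note, however, that the paper offers no proof of this lemma at all; it is quoted as a classical result of Legendre with a reference to Hasse, so there is no argument in the paper to compare yours against. Your write-up is the standard textbook derivation and would serve as a perfectly adequate self-contained proof.
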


The next lemma is on solutions of some equations.

\begin{lemma}\label{nag}
Let $x>0, y>0, z>0$ be integers. The solutions of the following equations are given by
\begin{center}
\begin{tabular}{|c|c|c|} \hline
& Equation & Solutions \\ \hline
$(i)$ & $a^x-b^y=\pm 1,  a, b \in \{2, 3, 5\}$ & $3-2=1, 2^2-3=1, 5-2^2=1, 3^2-2^3=1$ \\ \hline
$(ii)$ & $2^x+3^y=5^z$ & $2+3=5, 2^4+3^2=5^2$ \\ \hline
$(iii)$ & $2^x+3^y=7^z$ & $2^2+3=7$ \\ \hline
$(iv)$ & $2^x3^y-5^z=\pm 1$ & $2\cdot 3-5=1, 2^3\cdot 3-5^2=-1$ \\ \hline
$(v)$ & $3^x5^y-2^z=\pm 1$ & $3\cdot 5-2^4=-1$ \\ \hline
$(vi)$ & $2^x5^y-3^z=\pm 1$ & $2\cdot 5-3^2=1, 2^4\cdot 5-3^4=-1$ \\ \hline
\end{tabular}
\end{center}
\end{lemma}

The assertion $(i)$ is a special case of Catalan's Conjecture, now Mihailescu's Theorem when 
$x>1, y>1$, see \cite{mihai}. The case  $x=1$ or $y=1$ is immediate. The assertions $(ii)$ and $(iii)$ are due to Nagell \cite{nagell}.
For assertions $(iv)-(vi)$, see  \cite[Lemma 4]{LS>2k}.

The next lemma is \cite[Corollary 2.12]{GHL1} together with computations for $X\le 80$.

\begin{lemma}\label{upto7}
Let $X\ge 1, 3\nmid X$ and $1\le i\le 7$. Then the solutions of
\begin{align*}
P(X(X+3i))=5 \ \ {\rm and} \ 2|X(X+3i)
\end{align*}
are given by
\begin{align*}
(i, X)\in \{&(1, 2), (1, 5), (1, 125), (2, 4), (2, 10), (2, 250), (3, 1), (3, 16),
(4, 8), \\
&(4, 20), (4, 500), (5, 5), (5, 10), (5, 25), (5, 625), (6, 2), (6, 32), (7, 4)\}.
\end{align*}
\end{lemma}

We also need the following result which is \cite[Corollary 2.3]{GHL1} and \cite[Corollary 4.3]{GHL2}.

\begin{lemma}\label{4.3}
Let $d\in \{3, 4\}$, $\gcd(n, d)=1$ and $6450<n\le 10.6\cdot 3k$ if $d=3$ and
$10^6<n\le 138\cdot 4k$ if $d=4$. Then $P(\Delta(n, d, k))\ge n$.
\end{lemma}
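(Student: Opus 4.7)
The plan is to argue by contradiction: suppose $(k,n)$ avoids the listed exceptions, $n>4k$, $2\nmid n$, and
\[
P(\Delta)\le 4(k+1),\qquad \Delta:=n(n+4)\cdots(n+4(k-1)).
\]
Since $n$ is odd every term $n+4i$ is odd, so $\Delta$ is divisible only by odd primes $p\le 4k+3$. The basic structural fact I would exploit is: since $\gcd(4,p)=1$ for odd $p$, if $p\mid(n+4i)$ and $p\mid(n+4j)$ with $i\ne j$, then $p\mid (i-j)$; hence any prime $p>k$ divides \emph{at most one} term, and for $p>2(k-1)$ the exponent in $\Delta$ equals $\mathrm{ord}_p(n+4i_p)$ for the unique $i_p$ (if any).

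I would then split on the size of $n$ relative to $k$. For $n$ in the intermediate window $10^6<n\le 138\cdot 4k$, Lemma \ref{4.3} directly gives $P(\Delta)\ge n>4k$; the only residual odd values are $n\in\{4k+1,4k+3\}$, which I would dispatch by noting that the terms lie in the short interval $[n,n+4(k-1)]\subset[4k+1,8k-1]$ and using the coprimality of primes $p>k$ with a counting estimate on the number of primes available in $(k,4k+3]$ versus the $k$ slots to be filled. For $n>138\cdot 4k$ I would compare
\[
\log\Delta\ge k\log n
\]
against the upper bound obtained by writing $n+4i=u_iv_i$ with $u_i$ built from primes $\le k$ and $v_i$ from primes in $(k,4k+3]$: the $v_i$ are pairwise coprime and drawn from the finite set of primes $\le 4k+3$, while the total $p$-adic valuation in $\prod u_i$ is bounded by $\sum_{r\ge1}\lceil k/p^r\rceil$ for each $p\le k$ (Lemma \ref{m!}-style). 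Mertens-type estimates then bound the right-hand side by roughly $O(k\log k)+\pi(4k+3)\log(n+4k)$, which is incompatible with $k\log n$ for $n/k$ large.

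The remaining regime $n\le 10^6$ (including all small $k$) is where the listed exceptions arise. For $k=2,3$ I would verify the claim on a finite explicit set: the terms are at most two or three factors, so $4(k+1)\in\{12,16\}$-smooth products of two or three odd numbers in AP with difference $4$, and direct enumeration (supported by Lemma \ref{upto7} to handle the $\{2,3,5\}$-smooth cases and by Lemma \ref{d3}-style smoothness arguments) produces exactly $n\in\{11,21,45,77,121\}$ for $k=2$ and $n=117$ for $k=3$. For $4\le k$ with $n\le 10^6$ I would eliminate possible exceptions by bounding the number of $(4k+3)$-smooth integers in short intervals of length $4k$ and checking that, outside the listed cases, at least one term must carry a prime $>4(k+1)$.

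The main obstacle is the small-$n$ casework near the boundary $n\approx 4k$, together with certifying that the listed exceptions are the only ones: here one cannot appeal to Lemma \ref{4.3} and the smoothness bound alone is tight, so one has to combine arithmetic constraints (which residue classes mod small primes the terms occupy, and the forced appearance of primes in $(2k,4k+3]$) with explicit verification. Once those finite cases are handled, the large-$n$ analytic comparison closes the argument.
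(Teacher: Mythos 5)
Your proposal does not address the statement you were asked to prove. Lemma \ref{4.3} asserts the bound $P(\Delta(n,d,k))\ge n$ for $6450<n\le 10.6\cdot 3k$ when $d=3$ and for $10^6<n\le 138\cdot 4k$ when $d=4$; it has no exceptional pairs $(k,n)$, makes no parity assumption on $n$, is stated for both $d=3$ and $d=4$, and its conclusion ($P\ge n$) is far stronger than $P>4(k+1)$. What you have sketched is instead a proof of Theorem \ref{>4k+1}, i.e.\ the assertion that $P(n(n+4)\cdots(n+4(k-1)))>4(k+1)$ for odd $n>4k$ with the listed exceptions. Worse, your argument explicitly invokes Lemma \ref{4.3} to handle the ``intermediate window'' $10^6<n\le 138\cdot 4k$, so read as a proof of Lemma \ref{4.3} it is circular.

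For the record, the paper does not prove Lemma \ref{4.3} either: it imports it verbatim from \cite[Corollary 2.3]{GHL1} (the case $d=3$) and \cite[Corollary 4.3]{GHL2} (the case $d=4$). An actual proof requires the Sylvester--Erd\H{o}s type argument of those papers: assume $P(\Delta(n,d,k))<n$, so every term $n+di$ is composed of primes $p<n$ with $p\nmid d$; bound $\mathrm{ord}_p(\Delta)$ by $\mathrm{ord}_p(k!)$ plus a bounded correction using Lemma \ref{m!}; deduce from $\Delta\ge n^k$ an upper bound for $n$ of the shape $\bigl((k-1)!\prod_p p^{O(1)}\bigr)^{1/(k-\text{something})}$; and show this is incompatible with $n$ lying in the stated range above $6450$ (resp.\ $10^6$). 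None of that calibration to the threshold $n$ (as opposed to $4k+3$) appears in your write-up; your large-$n$ comparison of $\log\Delta\ge k\log n$ against a smoothness bound is in the right spirit but is aimed at the wrong target and is deployed in the regime $n>138\cdot 4k$, which is disjoint from the hypothesis of the lemma.
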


Let $p_{i, \mu, l}$ denote the $i$th prime congruent to $l$ modulo $\mu$. Let
$\del_{\mu}(i, l)=p_{i+1, \mu, l}-p_{i, \mu, l}$. The following lemma is a
computational result.

\begin{lemma}\label{diff}
$(i)$ Let $l\in \{1, 2\}$. Then $\del_3(i, l)\le 60$ for $p_{i, 3, l}\le 7348$.

\noindent
$(ii)$ Let $l\in \{1, 3\}$. Then $\del_4(i, l)\le 264$ for $p_{i, 4, l}\le 1.1\cdot 10^7$
except when $(p_{i, 4, l}, p_{i+1, 4, l})\in \{(7856441, 7856713), (10087201, 10087481),
(3358151, 3358423)$,\\ $(5927759, 5928031), (9287659, 9287939)\}$.
\end{lemma}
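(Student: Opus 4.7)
The plan is to verify both parts by direct computation, since each assertion concerns only a finite and explicit range of primes. For part (i), I would sieve all primes up to $6450$, discard the prime $3$, and split the remainder into the two residue classes modulo $3$. Computing successive differences in each sorted list produces the full table of values $\delta_3(i,l)$, and one checks that every entry is at most $60$. This amounts to inspecting differences among roughly $\pi(6450)-1 \approx 835$ primes partitioned between the two classes, which is entirely routine.

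For part (ii), the same procedure is applied with a larger sieve bound of $1.1 \cdot 10^7$, and primes other than $2$ are partitioned according to their residue modulo $4$. One scans the sorted list within each residue class and records every gap $\delta_4(i,l) > 270$. The claim is that the set of exceptional pairs is exactly the five ordered pairs $(p_{i,4,l}, p_{i+1,4,l})$ displayed in the statement. The prime count in this range is about $7 \cdot 10^5$; a segmented sieve, or a precomputed prime table, handles the enumeration without difficulty.

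The main challenge, such as it is, lies not in theory but in accuracy: one must be certain that the list of exceptions in part (ii) is exhaustive and correctly transcribed. A prudent verification would cross-check the computation using an independent sieve implementation, or against published prime tables, so as to rule out off-by-one errors or missed pairs. The specific numerical thresholds $60$ and $270$ are presumably chosen as the smallest convenient upper bounds valid throughout the ranges required by the application in Lemma \ref{4.3}; there is no structural argument available beyond direct verification.
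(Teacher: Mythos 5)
Your proposal matches the paper exactly: the authors state this lemma with no written argument beyond calling it "a computational result," so a direct sieve-and-check of prime gaps within each residue class is precisely the intended proof. The only caution worth adding is that the sieve must extend slightly beyond the stated bounds ($6450$ and $1.1\cdot 10^7$) so that the successor prime $p_{i+1,\mu,l}$ of the largest admissible $p_{i,\mu,l}$ is actually found.
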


\section{Newton Polygons}

Let $f(x) =\sum^m_{j=0} a_jx^j\in \Z[x]$ with $a_0a_m\neq 0$ and $p$ be a prime.
Let $S$ be the following set of points in the extended plane:
\begin{align*}
S = \{(0, \nu(a_m)), (1, \nu(a_{m-1})), (2, \nu(a_{m-2})), \cdots,
(m-1, \nu(a_1)),  (m, \nu(a_0))\}.
\end{align*}
Consider the lower edges along the convex hull of these points. The
left-most endpoint is $(0, \nu(a_m))$ and the right-most endpoint is
$(m, \nu(a_0))$. The endpoints of each edge belong to S and the slopes of
the edges increase from left to right. When referring to the edges of a
Newton polygon, we shall not allow two different edges to have the same
slope. The polygonal path formed by these edges is called the Newton
polygon of $f(x)$ with respect to the prime $p$ and we denote it by $NP_p(f)$.
The end points of the edges on $NP_p(f)$ are called the \emph{vertices} of $NP_p(f)$. We call 
the $x-$axis of the vertices to be \emph{breaks} of the Newton polygon and usually write 
$0=:x_0<x_1<\cdots <x_s:=m$ as the breaks where $(x_i, \nu(a_{m-x_i}), 0\le i\le s$ are the 
vertices of $NP_p(f)$.  We define the \emph{Newton function} of $f$ with respect to the prime $p$ as the real
function $f_p(x)$ on the interval $[0, m]$ which has the polygonal path formed
by these edges as its graph. Hence $f_p(i) = \nu(a_{m-i})$ for $i=0, m$ and at
all points $i$ such that $(i, \nu(a_{m-i}))$ is a vertex of $NP_p(f)$. We need the
following result which is a refinement of a lemma due to Filaseta \cite[Lemma 2]{Filbes}.
 This was proved in \cite[Lemma 2.13]{stirred}.

\begin{lemma}\label{<r}
Let $k, m$ and $r$ be integers with $m\ge 2k>0$.  Let
$g(x)=\sum^m_{j=0}b_jx^j\in \Z[x]$ and let $p$ be a prime such that
$p\nmid b_m$. Denote the Newton function of $g(x)$ with respect to $p$ by $g_p(x)$.
Let $a_0, a_1, \ldots, a_m$  be integers with $p\nmid a_0a_m$. Put
$f(x)=\sum^m_{j=0}a_jb_jx^j\in \Z[x]$. If $g_p(k)>r$ and $g_p(m)-g_p(m-k)<r+1$, then
$f(x)$ cannot have a factor of degree $k$.
\end{lemma}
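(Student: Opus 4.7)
The strategy is to assume for contradiction that $f$ has a factor of degree $k$ and then combine Lemma \ref{deg} (Dumas) with the convexity of the Newton polygon to force two incompatible bounds on the total vertical rise of a certain subset of edges.

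\textbf{Step 1: relate $NP_p(f)$ and $NP_p(g)$.} For each $j$, $\nu_p(a_{m-j}b_{m-j}) = \nu_p(a_{m-j}) + \nu_p(b_{m-j}) \geq \nu_p(b_{m-j})$, so the defining points of $NP_p(f)$ sit on or above those of $NP_p(g)$. The hypotheses $p\nmid a_0a_m$ force equality at the two endpoints, $\nu_p(a_m b_m) = \nu_p(b_m) = 0$ and $\nu_p(a_0 b_0) = \nu_p(b_0)$. Taking lower convex hulls with matching endpoints preserves the order, and I obtain $f_p(x) \geq g_p(x)$ on $[0,m]$, with equality at $x = 0$ and $x = m$.

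\textbf{Step 2: apply Dumas.} Suppose $f = f_1 f_2$ with $\deg f_1 = k$. Since $p\nmid a_m b_m$, the leading coefficient of $f_1$ is coprime to $p$, so $NP_p(f_1)$ starts at $(0,0)$. By Lemma \ref{deg}, the edges of $NP_p(f)$ split into two subsets $S_1, S_2$ corresponding to $f_1$ and $f_2$, with the edges in $S_1$ (rearranged in increasing slope order and translated) forming $NP_p(f_1)$. Writing $h_1$ for the total vertical rise of the edges in $S_1$, we get $h_1 = \nu_p((f_1)_0)\in \Z_{\geq 0}$, and the horizontal projection of $S_1$ equals $k$.

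\textbf{Step 3: convexity estimates.} Label the edges of $NP_p(f)$ as $e_1,\dots,e_s$ in order of strictly increasing slope $\sigma_1 < \cdots < \sigma_s$, with horizontal lengths $B_1,\dots,B_s$. Among all whole-edge subsets with horizontal total equal to $k$, the sum $\sum \sigma_j B_j$ is minimized by selecting the leftmost edges (smallest slopes) and maximized by the rightmost edges (largest slopes); by convexity these equal $f_p(k)$ and $f_p(m)-f_p(m-k)$, respectively. This yields
\begin{equation*}
f_p(k)\ \leq\ h_1\ \leq\ f_p(m) - f_p(m-k).
\end{equation*}
The assumption $m \geq 2k$ ensures $k$ and $m-k$ both lie in $[0,m]$ with $k \leq m-k$, so these bounds are meaningful.

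\textbf{Step 4: contradiction.} Combining Step 3 with Step 1, the hypothesis $g_p(k) > r$ gives $h_1 \geq f_p(k) \geq g_p(k) > r$, hence $h_1 \geq r+1$ by integrality. Using $f_p(m) = g_p(m)$ and $f_p(m-k) \geq g_p(m-k)$,
\begin{equation*}
h_1\ \leq\ f_p(m)-f_p(m-k)\ \leq\ g_p(m)-g_p(m-k)\ <\ r+1,
\end{equation*}
so $h_1 \leq r$. This contradicts $h_1\geq r+1$, so no factor of degree $k$ can exist. The main subtlety lies in Step 3: one must justify that the two extremal whole-edge subsets of horizontal length $k$ realize exactly $f_p(k)$ and $f_p(m)-f_p(m-k)$; this is a straightforward consequence of the convexity of $NP_p(f)$ and the convention that distinct edges carry distinct slopes.
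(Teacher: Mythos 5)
Your proof is correct and is essentially the standard argument behind this lemma, which the paper does not prove itself but imports from \cite[Lemma 2.13]{stirred} (itself a refinement of \cite[Lemma 2]{Filbes}): apply Dumas to get a whole-edge subset of horizontal length $k$, note its total rise $h_1$ is a nonnegative integer, squeeze it between $f_p(k)$ and $f_p(m)-f_p(m-k)$ by monotonicity of the slopes, and transfer to $g_p$ via $f_p\ge g_p$ with equality at the endpoints. One small imprecision in Step 3: the leftmost (resp.\ rightmost) edges need not themselves form a whole-edge subset of horizontal total exactly $k$, so the extrema over whole-edge subsets need not \emph{equal} $f_p(k)$ and $f_p(m)-f_p(m-k)$; but these two quantities are still valid lower and upper bounds for any such subset (by the fractional/greedy relaxation and convexity), which is all the displayed inequality and the rest of your argument require.
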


Lemma \ref{<r} implies the following result of Filaseta \cite[Lemma 2]{Filbes} 
together with a remark just after its proof in \cite{Filbes}. 

\begin{coro}\label{1/k}
Let $l, k, m$ be integers with $m\ge 2k>2l\ge 0$. Suppose $g(x)=\sum^m_{j=0}b_jx^j\in \Z[x]$ and
$p$ be a prime such that $p\nmid b_m$ and $p|b_j$ for $0\le j\le m-l-1$ and the right most
edge of the $NP_p(g)$ has slope $<\frac{1}{k}$. Then for any integers
$a_0, a_1, \ldots, a_m$ with $p\nmid a_0a_m$, the polynomial $f(x)=\sum^m_{j=0}a_jb_jx^j$
cannot have a factor with degree in $[l+1, k]$.
\end{coro}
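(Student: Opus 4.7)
The plan is to apply Lemma \ref{<r} with $r=0$ to each integer $k'$ in the range $[l+1,k]$, showing in each case that both hypotheses of the lemma are satisfied, and thereby concluding that $f(x)$ has no factor of degree $k'$.

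Fix $k'\in[l+1,k]$. The size condition $m\ge 2k'$ of Lemma \ref{<r} is immediate since $m\ge 2k\ge 2k'$. To verify the first hypothesis $g_p(k')>0$, note that $p\nmid b_m$ gives $g_p(0)=0$, while the assumption $p\mid b_j$ for $0\le j\le m-l-1$ says that every point of the defining set $S$ of $NP_p(g)$ whose $x$-coordinate is at least $l+1$ lies at height $\ge 1$. Suppose for contradiction that $g_p(k')=0$. Because the Newton function is convex, nonnegative, and vanishes at $0$, the equality $g_p(k')=0$ forces $g_p$ to vanish on the whole interval $[0,k']$. Hence the leftmost edge of $NP_p(g)$ would run horizontally from $(0,0)$ to a vertex $(x_1,0)$ with $x_1\ge k'\ge l+1$. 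This vertex lies in $S$, so $\nu_p(b_{m-x_1})=0$ for an index $m-x_1\le m-l-1$, contradicting the hypothesis. Therefore $g_p(k')>0$.

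For the second hypothesis $g_p(m)-g_p(m-k')<1$, recall the convention that slopes of distinct edges of $NP_p(g)$ strictly increase from left to right. The rightmost edge has slope $<1/k$ by assumption, so every slope of $NP_p(g)$ is strictly less than $1/k$, and hence
\begin{align*}
g_p(m)-g_p(m-k') \;<\; k'\cdot\frac{1}{k} \;\le\; 1.
\end{align*}

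Both hypotheses of Lemma \ref{<r} (with its parameter $k$ replaced by $k'$ and $r=0$) are therefore met, so the lemma yields that $f(x)$ has no factor of degree $k'$. As $k'$ ranges over $[l+1,k]$, the corollary follows. I anticipate the only delicate step to be the convexity-based verification of $g_p(k')>0$: it relies crucially on the combination of $g_p(0)=0$, nonnegativity of $g_p$, and the observation that no point of $S$ with $x$-coordinate at least $l+1$ can lie at height $0$; the remaining inequality is an immediate consequence of the monotonicity of the slopes along $NP_p(g)$.
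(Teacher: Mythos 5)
Your proposal is correct and follows essentially the same route as the paper: verify $g_p(K)>0$ for each $K\in[l+1,k]$ from the divisibility hypothesis, bound $g_p(m)-g_p(m-K)<1$ using that all slopes of $NP_p(g)$ are less than the rightmost slope $<1/k$, and apply Lemma \ref{<r} with $r=0$. Your convexity argument for $g_p(K)>0$ merely spells out a step the paper asserts in one line.
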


\begin{proof}
Since $p|b_j$ for $0\leq j \leq m-l-1$, we have $g_p(K)>0$ for $K \in [l+1,k]$. Let
$(m_1,g_p(m_1))$ be the starting point of the rightmost edge of $NP_p(g)$. Then
\begin{align*}
\frac{1}{m-m_1} \leq \frac{g_p(m)-g_p(m_1)}{m-m_1}< \frac{1}{k}
\end{align*}
 giving $m_1 <m-k\leq m-K$ for $K \leq k$. Hence for $K\in[l+1,k]$, $(m-K,g_p(m-K))$ lie on the
 rightmost edge implying $\frac{g_p(m)-g_p(m-K)}{K}< \frac{1}{k} \leq \frac{1}{K}$. Thus
 $g_p(m)-g_p(m-K) <1$. Now we apply Lemma \ref{<r} with $r=0$ to get the assertion.
\end{proof}

Unless otherwise mentioned, we always take $l=k-1$ while using 
Corollary \ref{1/k}. Next we need the following result generalizing 
\cite[Lemma 1]{GHL2} where the case $u=-1$ was proved. 

\begin{lemma}\label{mainupdate}
Let $u\in \{-1, 0\}$ and $1\le k\le \frac{n}{2}$. Suppose there is a prime $p$ satisfying
\begin{align}\label{condp}
p>d, p>\min(2k, d(d-1)) \ {\rm and \ further} \ p\ge \frac{(k+.5)d}{d-1} \ {\rm if} \ u=-1, p\le 2k
\end{align}
and
\begin{align*}
p|\prod^{k-1}_{j=0}(\al +(u+n-j)d), \ \ p\nmid \prod^{k}_{j=1}(\al +(u+j)d), \ p\nmid a_0a_n.
\end{align*}
Then $G(x)$ has no factor of degree $k$ and $G(x^d)$ does not have a factor of degree in
$[dk-d+1, dk]$. Further for $n$ odd and $k=\lf\frac{n}{2}\rf$, $G(x^d)$ does not have a factor of degree in
$[dk+1, dk+\frac{d}{2}]$.
\end{lemma}

\begin{proof}
We use Corollary \ref{1/k}. 
 We take $(m, k, l)$ to be $(n, k, k-1)$ for $G(x)$ having a factor of degree $k$ and $(dn, dk, d(k-1))$ for $G(x^d)$ having a factor of degree in $[dk-d+1,dk]$. Further for
 $n$ odd and $G(x^d)$ having a factor of degree in $(dn_0, dn_0+\frac{d}{2}]$ where $n_0=\lf\frac{n}{2}\rf$,
 we take $(m, k, l)$ to be $(dn, dn_0+\lf\frac{d}{2}\rf, dn_0)$.  We observe that the assumptions of
 Corollary \ref{1/k} are satisfied.  Let
\begin{align*}
\Delta_j=(\al +(u+1)d)\cdots (\al +(u+j)d).
\end{align*}
By Corollary \ref{1/k}, it suffices to show that
\begin{align}\label{phij}
\frac{\nu_p(\Delta_j)}{j}<\frac{1}{k+\frac{1}{2}} \ \ {\rm for} \ 1\le j\le n.
\end{align}
Let $j_0\ge 1$ be the minimum $j$ such that $p|(\al +(u+j)d )$ and we write $\al +(u+j_0)d=pl_0$.
Then $j_0>k$ since $p\nmid \Delta_k$. Note that $j_0\le p$. Further $1\le l_0<d$ otherwise
$l_0\ge d+1$ and $p\le pl_0-pd=\al +(u+j_0-p)d\le \al+ud<d<p$, a contradiction. Also
$p(d-1)\ge pl_0=\al +(u+j_0)d\ge \al+(u+k+1)d$. Thus $p(d-1)>(k+1)d$ if $u=0$. If $u=-1$ and $p>2k$, 
we have $p(d-1)\ge (2k+1)(d-1)\ge (k+.5)d$ since $d>1$.  This together with \eqref{condp}  imply
\begin{align}\label{.5d}
p\ge \frac{(k+.5)d}{d-1}.
\end{align}
For showing \eqref{phij}, we may restrict to those $j$ such that $\al +(u+j)d =pl$ for some $l$. Then
$(j-j_0)d =p(l-l_0)$ implying $d|(l-l_0)$ since $\gcd(p, d)=1$. Writing $l=l_0+sd$, we get
$j=j_0+ps$. Note that if $p|(\al +(u+i)d )$, then $\al +(u+i)d =p(l_0+r d)$
for some $r\ge 0$. Hence we have
\begin{align*}
\nu_p(\Delta _j)&=\nu_p((pl_0)(p(l_0+d))\cdots (p(l_0+sd ))=
s+1+\nu_p(l_0(l_0+d)\cdots (l_0+sd))
\end{align*}
for some integer $s\ge 0$. Further we may suppose that $s>0$ otherwise the assertion
follows since $p>d>l_0$ and $j_0>k$.  Further from \eqref{phij}, $j=j_0+ps\geq k+1+ps$ and
$\frac{k+1+ps}{k+.5}=1+\frac{ps+.5}{k+.5}$, it suffices to show
\begin{align}\label{l_or}
\phi_s:=s+\nu_p(l_0(l_0+d)\cdots (l_0+sd))<\frac{ps+.5}{k+.5}.
\end{align}
We consider two cases.

\noindent
{\bf Case I:} Assume that $s<p$. Then $p$ divides at most one term of
$\{l_0+id : 0\le i\le s\}$ and we obtain from $l_0+sd<(s+1)d<p^2$ that $\phi_s\le s+1$. To show
\eqref{l_or}, we need to show that $\frac{ps+.5}{k+.5}-1>s$ or
$s(p-k-\frac{1}{2})>k$. This is true if $p\ge 2k+1$. Thus we may suppose that $p\le 2k$.  Since
 $p\geq \frac{(k+.5)d}{d-1}$ by \eqref{.5d}, we get $(d-1)(p-k-.5)>k$. Thus
$s(p-k-.5)>k$ is valid for $s\ge d-1$ and therefore we may now assume $s\le d-2$.
Then $l_0+sd\le d-1+(d-2)d<p$ and hence $\phi_s=s$. Now \eqref{l_or} is valid since $p\ge k+1$.

\noindent
{\bf Case II:} Let $s\ge p$. Let $r_0\le s$ be such that $\nu_p(l_0+r_0d)$ is maximal. Then
\begin{align*}
\phi_s\le s+\nu_p(l_0+r_0d)+ \nu_p(r_0!(s-r_0)!)\le s+
\frac{\log (l_0+sd)}{\log p}+\frac{s-1}{p-1}
\end{align*}
by using Lemma \ref{m!}.  We have $p\ge d+1$. This with $l_0\le d-1<p\le s$ imply
$\log (l_0+sd)\le \log s(d+1)=\log s +\log (d+1)\le \log s +\log p$. Hence
\begin{align*}
\phi_s\le s+\frac{s}{p-1}+\frac{\log s}{\log p}+1-\frac{1}{p-1}.
\end{align*}
To show \eqref{l_or}, it is enough to show that
\begin{align*}
1+\frac{1}{p-1}+\frac{\log s}{s\log p}+\frac{1}{s}(1-\frac{1}{p-1}-\frac{1}{2k+1})<\frac{2p}{2k+1}.
\end{align*}
The left hand side of the above inequality is a decreasing function in $s$.  Since $s\ge p$,
the left hand side of the above inequality is at most
\begin{align*}
1+\frac{1}{p-1}+\frac{1}{p}+\frac{1}{p}(1-\frac{1}{p-1}-\frac{1}{2k+1})=1+\frac{3}{p}-\frac{1}{p(2k+1)}
\end{align*}
 and therefore it suffices to show
\begin{align}\label{iterm2}
1+\frac{3}{p}-\frac{1}{p(2k+1)}<\frac{2p}{2k+1}.
\end{align}
Let $p\ge 2k+1$. Then $p\ge 3$ and the left hand side of \eqref{iterm2} is at most
\begin{align*}
1+1-\frac{1}{p(2k+1)}<2\le \frac{2p}{2k+1}.
\end{align*}
Thus we may assume that $p\le 2k$. Then $p>d(d-1)$. Further $d\ge 3$ since
$p(d-1)\ge \al+(u+k+1)d$ and $p<2k$. Therefore the left hand side of \eqref{iterm2} is at most
\begin{align*}
1+\frac{3}{d(d-1)}-\frac{1}{p(2k+1)}<1+\frac{1}{d-1}=\frac{d}{d-1}\le \frac{2p}{2k+1}.
\end{align*}
by  \eqref{.5d}.
\end{proof}

The following corollary easily follows from Lemma \ref{mainupdate}.
\begin{coro}\label{dlk}
Let $u\in \{0, -1\}$ and $n\ge 2k>0$. Suppose that $P(a_0a_n)\le d$ and
\begin{align*}
P ((\alpha+d(u+n-k+1)) \cdots (\alpha + d(u+n)))>d(u+k+1).
\end{align*}
Then $G_q(x)$ does not have a factor of degree $k$ and $G_q(x^d)$
do not have a factor of degree in $[dk-d+1, dk]$. Further for $n$ odd and 
$k=\lf\frac{n}{2}\rf$, $G_q(x^d)$ does not have a factor of degree in $[dk+1, dk+\frac{d}{2}]$.
\end{coro}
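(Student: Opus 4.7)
The plan is to derive Corollary \ref{dlk} as a direct specialization of Lemma \ref{mainupdate}, by taking $p$ to be the largest prime factor singled out by the hypothesis and then verifying each of the assumptions of that lemma one at a time.

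First I would let $p:=P\bigl((\alpha+d(u+n-k+1))\cdots(\alpha+d(u+n))\bigr)$, so that by the hypothesis $p>d(u+k+1)$. This immediately gives the first two size conditions demanded by Lemma \ref{mainupdate}. Indeed, since $u\in\{0,-1\}$ and $k\ge 1$ we have $u+k+1\ge k\ge 1$, so $p>d(u+k+1)\ge d$. Moreover $d\ge 2$, so $p>d(u+k+1)\ge dk\ge 2k\ge\min(2k,d(d-1))$. The divisibility $p\mid\prod_{j=0}^{k-1}(\alpha+(u+n-j)d)$ holds by definition of $p$, since that product is precisely $(\alpha+d(u+n-k+1))\cdots(\alpha+d(u+n))$.

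Next I would check that $p$ divides neither of the products appearing in the last two conditions of \eqref{condak}. For $1\le j\le k$ one has
\begin{equation*}
\alpha+(u+j)d\le(d-1)+(u+k)d=d(u+k+1)-1<p,
\end{equation*}
so $p\nmid\prod_{j=1}^{k}(\alpha+(u+j)d)$. Similarly, the assumption $P(a_0a_n)\le d<p$ yields $p\nmid a_0a_n$. With all conditions of Lemma \ref{mainupdate} now verified, that lemma applies and produces exactly the stated conclusion: $G_q(x)$ has no factor of degree $k$, and $G_q(x^d)$ has no factor of any degree in the interval $[dk-d+1,dk]$.

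The argument is essentially a bookkeeping exercise, and there is no genuine obstacle; the only mildly delicate point is keeping track of the off-by-one shifts induced by the two possible values of $u$ when checking that $\alpha+(u+j)d<p$ throughout the range $1\le j\le k$. Because the hypothesis is phrased in the form $p>d(u+k+1)$ (with the same shift $u+k+1$ that appears in the upper bound for the elements of the short product), the inequality works out uniformly for both $u=0$ and $u=-1$ without any case analysis.
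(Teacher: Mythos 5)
Your proof is correct and is exactly the intended derivation: the paper gives no details, stating only that the corollary ``easily follows from Lemma \ref{mainupdate}'', and your choice of $p$ as the greatest prime factor of the product together with the verification $\alpha+(u+j)d\le d(u+k+1)-1<p$ for $1\le j\le k$ is the natural (and essentially only) way to fill that in. All hypotheses of the lemma are checked correctly, including the uniform treatment of $u\in\{0,-1\}$.
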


\section{Proof of Theorem \ref{>4k+1}}

Let $k\ge 2, n>4k$ and $2\nmid n$. Assume that $P(n(n+4)\cdots (n+4(k-1)))\le 4(k+1)$. Let
\begin{align*}
S_M=\{m: m\ge1, m \ {\rm odd}, P(m(m+4))\le M\}.
\end{align*}
The set $S_M$ for $M\le 31$ is given in \cite{lehmer} and for $M=100$ in \cite{najman}. In fact,
$m=x-2$ with $x$ listed in the table \cite{najman} and $m=N-4$ for $N$ listed in \cite[Table IIIA]{lehmer}.

Let $k=2$. Then $P(n(n+4))\le 11$ implying $n\in S_{11}$. Since $n>8$, we have $n\in \{11, 21, 45, 77, 121\}$.

Let $k=3$. Then $P(n(n+4)(n+8))\le 13$ giving $P(n(n+4))\le 13$ and $P((n+4)(n+8))\le 13$. Hence both
$n\in S_{13}$ and $n+4\in S_{13}$. Since $n>12$, we have $n=117$.

Let $4\le k \le 8$. Since $P(\Delta(n, 4, k))\le 4k+4$, we have $P(n(n+4))\le 31$, $P((n+4)(n+8))\le 31$ and
$P((n+8)(n+12))\le 31$.  Hence $n+4i\in S_{31}$ for each $0\le i\le 2$.  Then $n\in \{17, 19, 21, 23, 27, 87\}$. For these values $n$ and $k$ such that $n>4k$, we check that $P(\Delta(n, 4, k))>4(k+1)$. Thus $k\ge 9$.

Let $9\le k<67$. Since $P(\Delta (n, 4, k))\le 4k+4$, we have $\omega(\Delta(n, 4, k))\le \pi(4k+4)$.
We check that $k-\pi(4k+4)+\pi(100)>\lc \frac{k}{2}\rc$. Hence there is some
$i_0$ with $0\le i_0\le k-2$ such that $P((n+4i_0)(n+4(i_0+1)))\le 100$. Then $n+4i_0=m\in S_{100}$.
Suppose $m>10^7$. We check that $P(\prod^4_{i=1}(m-4i))>280$ and $P(\prod^4_{i=1}(m+4+4i))>280$ for each $m\in S_{100}$ and $m>10^7$. Thus $P(\prod^{k-1}_{i=0}(n+4i)>280$ implying the assertion when $n+4i_0>10^7$. Thus we can assume that $m\le 10^7$. Then $n\le n+4i_0\le 10^7$. We compute that $P(\prod^8_{i=0}(n+4i))>280$ except when $n\in \{465, 469, 473, 885, 1513\}$. For these values of $n$, we see that $P(\displaystyle{\prod^8_{i=0}}(n+4i))>52$ which is $>4(k+1)$ for $9\le k\le 12$. Further
for these values of $n$, we also have $P(\prod^{12}_{i=0}(n+4i))>280$ which is $>4(k+1)$ for $13\le k<67$.

Thus we may suppose that $k\ge 67$. Since $P(\Delta(n, 4, k))\leq 4k+4<n+4$, we see that 
each of $n+4, n+8, \cdots, n+4(k-1)$ are composite and hence there is a prime 
$p_{i, 4, l}\equiv n($mod $4)$ such that $p_{i, 4, l}\le n<n+4<n+4(k-1)<n+4k\le p_{i+1, 4, l}$. Thus 
$p_{i+1, 4, l}-p_{i, 4, l}\ge 4k$. Let $n\le 1.1\cdot 10^7$. By Lemma \ref{diff}, we can assume that $k\in \{67, 68, 69, 70\}$ and $p_{i, 4, l}\le n<n+4(k-1)<n+4k\le p_{i+1, 4, l}$ for $(p_{i, 4, l}, p_{i+1, 4, l})$ 
listed in Lemma \ref{diff}. For such values of $n$, we check that that $P(\prod^{k}_{i=0}(n+4i))>284$. Hence we can assume that $n>1.1\cdot 10^7$.

Let $4k<n<4k+4$. Since $n>1.1\cdot 10^7$, we have $10^6<n+4\leq 138\cdot 4(k-1)$. By 
Lemma \ref{4.3},  we have $P(\Delta(n+4, 4, k-1) )\geq n+4$. Hence 
$P(\Delta(n, 4, k) )\geq P(\Delta(n+4, 4, k-1) )\geq n+4>4k+4$. Thus we can assume that 
$n>4k+4$. Further again by Lemma \ref{4.3}, we can now assume that $n>138\cdot 4k$.

Since $P(\Delta(n, 4, k))\le 4k+4$, we have $\om(\Delta (n, 4, k))\le \pi(4k+4)-1$. We continue as in 
\cite[Section 3]{GHL2} with $d=4, t=\pi(4k+4)-1$ to obtain
\begin{align}\label{Lbd}
n\le \left((k-1)! \prod_{p\le p_l}p^{L_0(p)}\right)^{\frac{1}{k+1-\pi(4k+4)}}
\end{align}
for every $l\ge 1$ where
\begin{align*}
L_0(p)=\begin{cases} \min (0, h_p(k+1-\pi(4k))-\sum^{h_p}_{u=1}\lf \frac{k-1}{p^u}\rf ) & {\rm if} \ p\nmid d\\
-\nu_p((k-1)!) & {\rm if} \ p| d
\end{cases}
\end{align*}
with $h_p\ge 0$ such that $[\frac{k-1}{p^{h_p+1}}]\le k+1-\pi(4k+4)<[\frac{k-1}{p^{h_p}}]$. 
Taking $l=3$ in \eqref{Lbd}, we find that $n<1.1\cdot 10^7$ when $k\le 400$. Thus $k>400$.

We now write $n=v\cdot 4k$ with a real number $v\ge v_0:=138$. We continue as in the last paragraph of \cite[pp. 433]{GHL2} to obtain
\begin{align*}
\log (v_0\cdot 8\cdot e)<\frac{4\log (v_0\cdot 4k)}{\log (4k+3)}
\left(1+\frac{1.2762}{\log (4k+3)}\right).
\end{align*}
The right hand side of the above inequality is a decreasing function of $k$ and the inequality does not hold at $k=401$. This is a contradiction.
\qed

\section{Proof of $G_{u+\frac{\al}{3}}(x^3)$ not having a factor of degree $\ge 4$}

Let $d=3$, $\al \in \{1, 2\}$, $u\in \{0, -1\}$ and $P(a_0a_n)\leq 3$. It suffices to show
$G_{u+\frac{\al}{3}}(x^3)$ does not have a factor of degree in $\{3k, 3k-1, 3k-2\}$ for
$2\le k\le \frac{n}{2}$ and further a factor of degree $\frac{3(n-1)}{2} +1$ when $n$ is odd. By 
Corollary \ref{dlk}, we may assume that $P(\prod^{k-1}_{i=0}(\al +3(u+n-i)))<3(u+k+1)$. 
Since $n\ge 2k$, by Lemma \ref{d3}, we have $u=0$ and 
\begin{align}\label{3k+al}
3k<P(\prod^{k-1}_{j=0}(\al +3(n-j)))<3(k+1)
\end{align}
except when $k=2$ and $\al+3(u+n-k+1)=125$.

Let $k=2$ and $\al+3(u+n-k+1)=125$. Then $\al=2$ and $(u, n)\in \{(-1, 43), (0, 42)\}$. 
We consider the Newton polygon with respect to $p=2$ of the polynomials $G_{u+\frac{2}{3}}(x^3)$ with 
all $a_j'$s equal to $1$. The breaks of the Newton polygon are 
$0<32\cdot 3<40\cdot 3<43\cdot 3=3n$ when $u=-1, n=43$ and
$0<32\cdot 3<40\cdot 3<42\cdot 3=3n$ when $u=0, n=42$. Further the minimum slope(slope 
of the left most edge) is $\frac{1}{3}(1+\frac{1}{32})$ and the maximum slopes (slope of the 
right most edge) are $\frac{4}{9}$ and $\frac{1}{2}$ when $(u, n)=(-1, 43), (0, 42)$,  respectively. Thus by
Lemma \ref{<r} with $r=\lf \frac{t}{3}\rf$, $t\in \{4, 5, 6\}$, the polynomials
$G_{-1+\frac{2}{3}}(x^3)$ does not have factor of degree $t\in \{4, 6\}$. Hence $G_{-1+\frac{2}{3}}(x^3)$
may have a factor of degree $5$ when $n=43$ and $G_{\frac{2}{3}}(x^3)$ may have factor of degree
$t\in \{4, 5, 6\}$ when $n=42$.

Therefore we now suppose that $\al +3(u+n-k+1)\neq 125$ when $k=2$. By Lemma \ref{mainupdate},
we may restrict to those $k$ such that $P(\prod^{k-1}_{j=0}(\al +3(n-j)))=\al+3k$. Thus
$\al=1$ if $k$ is even and $\al=2$ if $k$ is odd. Let
\begin{align*}
R(k)=\{p: p|\prod^k_{i=1}(\al+3i), p \ {\rm prime}\}
\end{align*}
where $\al=1$ if $k$ is even and $\al=2$ if $k$ is odd. Again by Lemma \ref{mainupdate}, we
may suppose that $p|\prod^{k-1}_{j=0}(\al +3(n-j))$ imply $p\in R(k)$. Thus
$\om(\prod^{k-1}_{j=0}(\al +3(n-j)))\le |R(k)|$. Since $\al+3k$ is prime, we now have 
\begin{align*}
|R(k)|= \begin{cases}
\pi_1(3k+1)+\pi_2(\frac{3k+1}{2})=\pi_1(3k)+1+\pi_2(\frac{3k}{2}) & {\rm if} \ k \ {\rm is \ even}\\
\pi_2(3k+2)+\pi_1(\frac{3k+2}{2})=\pi_2(3k)+1+\pi_1(\frac{3k+1}{2}) & {\rm if} \ k \ {\rm is \ odd}\\
\end{cases}
\end{align*}
where $\pi_l(x)=|\{p\le x: p\equiv l({\rm mod} \ 3)\}|$ for $l\in \{1, 2\}$.

Let $k=2$. Then $p|(1+3n)(1+3n-3)$ imply  $p\in \{2, 7\}$. Hence $\{1+3n, 1+3n-3\}=\{2^a, 7^b\}$ 
for some positive integers $a, b$. Hence $7^b-2^a=\pm 3$. If $a\ge 3$, we get a contradiction 
modulo $8$. Hence $a\le 2$ and we have the only solution $7-4=3$. Therefore 
$1+3n=7, 1+3n-3=4$ giving $n=2$. This is not possible since $n\ge 2k$. 

Thus $k\ge 3$. Let $k\geq 20$. Let $l\in \{1, 2\}$ and $m$ is congruent to $l$ modulo $3$. Then note that if the set $\{m, m+3, \cdots , m+3(k-1)\}$ does not 
contain a prime, then the difference between two consecutive primes 
congruent $l($mod $3)$ is at least $(m+3k)-(m-3)=3k+3\geq 63$ contradicting 
Lemma \ref{diff} $(i)$ if $m\leq 7348$. Therefore the set $\{m, m+3, \cdots, m+3(k-1)\}$ contains a prime if $m\le 7348$ and hence $P(\prod^{k-1}_{i=0}(m+3i))\ge m$ if $m\le 7348$. For $3\le k<20$, we check that $P(\prod^{k-1}_{i=0}(m+3i))\ge \min(m, 3(k+1))$ for $3k<m\le 7348, 3\nmid m$ except when $k=3, m=22$. 
Thus for $k\ge 3$, we may assume by \eqref{3k+al} that either $\al+3(n-k+1)>7348$ or $k=3, \al+3(n-k+1)=22$. 
Since $\al=2$ when $k$ is odd, we obtain $\al+3(n-k+1)>7348$. Let $3\le k\le 8$. After deleting terms in 
$\{\al+3n, \al+3(n-1), \cdots, \al+3(n-k+1)\}$ divisible by $p\in R(k), p\ge 7$ we are left with at least $2$ indices $0\le i_1<i_2\le 7$ such that $p|(\al+3(n-i_1))(\al+3(n-i_2))$ imply $p\in \{2, 5\}$. By putting $X=\al+3(n-i_2)$, we obtain from
Lemma \ref{upto7} that $X\le 625$. But $X=\al+3(n-i_2)\ge \al+3(n-k+1)>7348$ which is a contradiction.

Thus we now have $k\ge 9$ and $\al+3(n-k+1)>7348$. Further we may also assume that
$\al+3(n-k+1)\ge 10.6\cdot 3k$ by Lemma \ref{4.3} and \eqref{3k+al}. By taking 
$m=\al+3(n-k+1), t=|R(k)|$ in 
\cite[(4)]{GHL2}, we obtain from \cite[(6)]{GHL2} that $\al+3(n-k+1)<4480$ for $9\le k\le 180$.
Thus we may suppose that $k>180$. We proved in the last para of
\cite[Section 3(A), pp. 62]{GHL1} that $\om(\prod^{k-1}_{i=0}(m+3i))\ge \pi(3k)$ for $k>180$
when $m>3k$ and $3\nmid m$. Therefore $\om(\prod^{k-1}_{j=0}(\al +3(n-j)))\ge \pi(3k)$. But
$\pi(3k)=\pi_1(3k)+\pi_2(3k)+1$ and we will have the contradiction $\pi(3k)>|R(k)|$ if 
\begin{align}\label{3k/2p} 
\begin{split}
\pi_2(3k)>\pi_2(3k/2) \ & {\rm if} \ k \ {\rm is \ even}\\
\pi_1(3k)>\pi_1((3k+1)/2) \ & {\rm if} \ k \ {\rm is \ odd}.  
\end{split}
\end{align}
We check that it is true when $3k/2\le 6450$. Hence we now assume $3k/2>6450$. Taking 
$(m, k_1)=(3k/2+1, k/2)$ if $k$ is even and $(m, k_1)=((3k+1)/2+3, (k-1)/2)$ if $k$ is odd, we see from 
Lemma \ref{4.3} that $P:=P(\Delta(m, 3, k_1)\geq m$. We note that $m\equiv 1, 2$ modulo $3$ according as 
$k$ is even or $k$ is odd, respectively.  Further observe that $2P\ge 2m>m+3(k_1-1)$ and hence $P$ is 
one of the terms of $m, m+3, \cdots, m+3(k_1-1)$ giving the assertion \eqref{3k/2p}. 
\qed

\section{Proof of $G_{u+\frac{\al}{4}}(x^4)$ not having a factor of degree $\ge 5$}

Let $d=4, u\in  \{0, -1\}$  and $\al \in \{1, 3\}$. It suffices to show 
that $G_{u+\frac{\al}{4}}(x^4)$ does not have a factor of degree in 
$\{4k, 4k-1, 4k-2, 4k-3\}$ for $2\le k\le \frac{n}{2}$ and further a 
factor of degree in $\{\frac{4(n-1)}{2} +1, \frac{4(n-1)}{2} +2\}$ when 
$n$ is odd. Suppose this is not true. By Corollary \ref{dlk}, we may assume 
that $P(\prod^{k-1}_{i=0}(\al +4(u+n-i))<4(u+k+1)$. Then by 
Theorem \ref{>4k+1} and Corollary \ref{>4k}, we obtain
$u=-1, k=2, \al +4(u+n-k+1)\in \{21, 45\}$ or $u=0,  k=2, \al +4(n-k+1)\in \{11, 21, 45, 77, 121\}$ or 
$u=0, k=3, \al +4(u+n-k+1)=117$. For the values of $u, k, n, \al$ given by these values, we obtain from Lemma \ref{mainupdate} that $G_{u+\frac{\al}{4}}(x^4)$ do not have a factor of degree in $\{4k, 4k-1, 4k-2, 4k-3\}$. 
When $u=-1$, we have $k=2$ and $(n, \al)\in \{(7, 1), (13, 1)\}$ and in both these cases, the prime 
$p=7$ works in Lemma \ref{mainupdate}. For $u=0,  k=2$, we have $(n, \al)=(3, 3)$ or $\al=1, n\in \{6, 12, 20, 31\}$. 
Since $n\ge 2k$, we have $\al=1$ and $n\in \{6, 12, 20, 31\}$ and prime $p=7$ works for $n\in \{6, 12, 20\}$ 
and $p=11$ works for $n=31$ in Lemma \ref{mainupdate}. For $u=0, k=3$, we have $\al=1, n=31$ and here the prime 
$p=11$ works in Lemma \ref{mainupdate}. 
\qed

\section{Proof of Theorems \ref{1/3} and \ref{1/4}}

We observe that if $G(x^d)$  has no factor of degree $\ge l$ with $l\le \frac{dn}{2}$,
then $G(x)$ has no factor of degree $\ge \frac{l}{d}$. Recall that by a factor, we meant the factor of degree
less than or equal to half of total degree and its co factor is the one whose degree is more than half of the
total degree. If $G(x^d)$ has a factor of degree $d$ only, then $G(x)$ may have a linear factor but no other
factor of degree $\ge 2$. Further if $G_{\frac{\al}{3}}(x^3)$ has a quadratic factor only or a factor of degree $5$
only, then  $G_{\frac{\al}{3}}(x)$
will be irreducible. Hence if the assertion of Theorems \ref{1/3} and \ref{1/4} are proved for  $G(x^d)$, then
the assertion of Theorems \ref{1/3} and \ref{1/4} follow.

Therefore we prove the assertions of Theorems \ref{1/3} and \ref{1/4} for  $G(x^d)$.
From Sections $5$ and $6$, we may assume that $G(x^d)$ has a factor of degree in
$\{1, \ldots, d\}$ except when $q=-\frac{1}{3}, n=43$ where $G_q(x^3)$ may have a factor of
degree $5$ and $q=\frac{2}{3}, n=42$ where $G_q(x^3)$ may have a factor of degree in $\{4, 5, 6\}$.
Then by Lemma \ref{mainupdate}, we may suppose that prime divisors of $\al+d(u+n)$ are given by
\begin{center}
\begin{tabular}{|c|c|c|c||c|c|c|c|} \hline
$d$ & $u$ & $\al$ & $p|\al+d(u+n)$ & $d$ & $u$ & $\al$ & $p|\al+d(u+n)$ \\ \hline
$3$ & $-1$ & $1$ & $2$ & $4$ & $-1$ & $1$ & $3$ \\ \hline
$3$ & $-1$ & $2$ & $2$ & $4$ & $-1$ & $3$ & $3$ \\ \hline
$3$ & $0$ & $1$ & $2$ & $4$ & $0$ & $1$ & $3, 5$  \\ \hline
$3$ & $0$ & $2$ & $2, 5$ & $4$ & $0$ & $3$ & $3, 7$  \\ \hline
\end{tabular}
\end{center}

\subsection{Proof of Theorem \ref{1/4}:}
Let $d=4$. We take $p$ to be the smallest prime dividing $\al+4(u+n)$. Thus $p=3$ unless
$\al+4(u+n)=1+4n=5^b$ for some positive integer $b$ where we take $p=5$ and $\al+4(u+n)=3+4n=7^c$
for some positive integer $c$ where we take $p=7$. We use Corollary \ref{1/k}. Taking
$m=4n, k\in \{1, 2, 3, 4\}, l=k-1$, we observe that the conditions of Corollary \ref{1/k}
are satisfied. We follow the notations as in the proof of Lemma \ref{mainupdate}. Let
\begin{align*}
\Delta_j=(\al +(u+1)d)\cdots (\al +(u+j)d).
\end{align*}
We show that
\begin{align}\label{<=1}
\phi_j=\frac{\nu_p(\Delta_j)}{j}\le 1 \ \ {\rm for} \ 1\le j\le n
\end{align}
and
\begin{align}\label{<1}
\phi_j<1 \ \ {\rm for} \ 1\le j\le n \ \ {\rm when} \ p=3, (u, \al)\in \{(-1, 1), (0, 3)\}.
\end{align}
This with Corollary \ref{1/k} with $p=5$ and $p=7$ according as $(u,\alpha)=(0, 1)$ and
$(u,\alpha)=(0, 3)$ respectively and $p=3$ if $u=-1$ will imply Theorem \ref{1/4}.

We follow as in the proof of Lemma \ref{mainupdate}. We have $j_0, l_0$ given by
\begin{center}
\begin{tabular}{|c|c|c|c|c||c|c|c|c|c|} \hline
$u$ & $\al$ & $p$ & $j_0$ & $l_0$ & $u$ & $\al$ & $p$ & $j_0$ & $l_0$ \\ \hline
$-1$ & $1$ & $3$ & $3$ & $3$ & $-1$ & $3$ & $3$ & $1$ & $1$  \\ \hline
$0$ & $1$ & $3$ & $2$ & $3$ & $0$ & $3$ & $3$ & $3$ & $5$  \\ \hline
$0$ & $1$ & $5$ & $1$ & $1$ & $0$ & $3$ & $7$ & $1$ & $1$  \\ \hline
\end{tabular}
\end{center}
We find that \eqref{<=1} and \eqref{<1} are valid for $1\le j\le 3$. Let $j>3$ and
we now show that $\phi_j<1$ for $j>3$. We can restrict to $j$ such that $p|(\al+4(u+j))$ and
such $j$ are given by $j=j_0+ps$ with $s>0$. As in the proof of Lemma \ref{mainupdate}, it suffices to show
\begin{align*}
\nu_p(\Delta _j)=s+1+\nu_p(l_0(l_0+4)\cdots (l_0+4s))<j_0+ps.
\end{align*}
This is true for $1\le s\le 3$. For $s\ge 4$, we find that the left hand side of the above inequality
is at most $s+1+\nu_p((l_0+4s)!)-1$ since there is at least one multiple of $p$ dividing $(l_0+4s)!$ but
not dividing $l_0(l_0+4)\cdots (l_0+4s)$. This together with $\nu_p(r!)<\frac{r}{p-1}$, $p\ge 3$ and $\frac{l_0}{2}<j_0$ imply
\begin{align*}
\nu_p(\Delta _j)\le s+\frac{l_0+4s}{p-1}\le s+\frac{l_0+4s}{2}=\frac{l_0}{2}+3s<j_0+ps.
\end{align*}
\qed

\subsection{Proof of Theorem \ref{1/3}:}
Let $d=3$. First assume that $u=0, \al=2$ and $5|(2+3n)$. We consider the polynomial $G_{\frac{2}{3}}(x^3)$.
We use Corollary \ref{1/k} with $p=5$ to show that $G_{\frac{2}{3}}(x^3)$ does not have a factor with degree in
$\{1, 2\}$. As in the proof of Lemma \ref{mainupdate}, it suffices to show
\begin{align*}
\nu_5(5\cdot 8\cdots (2+3j))<\frac{3j}{2}
\end{align*}
where we may assume that $j>1$. We obtain by using Lemma \ref{m!}  that
\begin{align*}
\nu_5(5\cdot 8\cdots (2+3j))\le \nu_5((2+3j)!)\le \frac{1+3j}{4}<\frac{3j}{2}.
\end{align*}
Hence $G_{\frac{2}{3}}(x^3)$ does not have a factor with degree in $\{1, 2\}$ in this case.

From now on, we may suppose that $5\nmid (2+3n)$ when $u=0, \al=2$. Therefore for each $u\in \{0, -1\}$ and
for each $\al\in \{1, 2\}$, we have $\al+3(u+n)=2^a$ for some integer $a>1$. We take $p=2$ and $\nu =\nu_2$
from now onwards in this section. We may assume by Section 5 that $G(x^3)$ has a factor of degree in $\{1, 2, 3\}$.
Let $\eta=0$ if $\al=1$ and $1$ if $\al=2$. From $\al+3(u+n)=2^a$, we have $a=2s+\eta$ for some $s>0$ and
$n=-u+2^\eta(1+2^2+\cdots +2^{2(s-1)})$. Put $n_0=0, n_s=n$ and
\begin{align}\label{n_i}
n_i=2^\eta(2^{2(s-1)}+2^{2(s-2)}+\cdots +2^{2(s-i)}) \ {\rm for} \ 1\le i\le s-1.
\end{align}
Then for $1\le i\le s-1$, we have
\begin{align*}
n_i-1=2^\eta(2^{2(s-1)}+2^{2(s-2)}+\cdots +2^{2(s-i+1)})+\sum^{2(s-i)+\eta-1}_{j=0}2^j
\end{align*}
and hence by Lemma \ref{m!}, we have
\begin{align}\label{ni}
\nu((n_i-1)!)=n_i-1-(i-1+2(s-i)+\eta)=n_i-a+i.
\end{align}
Also
\begin{align}\label{n}
\nu((n-1)!)=\begin{cases}
n-s & \ {\rm if} \ u=0, \al=1\\
n-s-1 & \ {\rm otherwise}.
\end{cases}
\end{align}
Let $1\le j<2^h$ for some $h>0$. Write $j-1=j_0+2j_1+\cdots +2^{h-1}j_{h-1}$ in base $2$
with $0\le j_u\le 1$ for $0\le u<h$. Note that $\sum^{h-1}_{u=0}j_u\le h-1$.
 Hence by Lemma \ref{m!}, we have
\begin{align}\label{j}
\nu((j-1)!)=j-1-\sum^{h-1}_{u=0}j_u\ge j-1-(h-1)=j-h.
\end{align}

For $1\le i\leq n-1$, if $\al+3(u+i)=2^rt$ with $2\nmid t$, then from $3(n-i)=2^r(2^{a-r}-t)$, we obtain
$\nu(\al+3(u+i))=r=\nu(n-i)$. Therefore
\begin{align*}
\nu({\displaystyle{\prod_{i=l}^{n}}} (\al+3(u+i)))=a+\nu((n-l)!) \ {\rm for} \ 1\le l<n-1.
\end{align*}
We now consider the $G(x^3)$ with all $a_j'$s equal to $1$ and call it $G^*$. Recall that 
$G=G_{u+\frac{\al}{3}}$ with $(u, \al)\in \{(-1, 1), (-1, 2), (0, 1), (0, 2)\}$. Then the Newton Polygon 
$NP_2(G^*)$ of $G^*$ with respect to prime $2$ is given by the lower edges along the convex hull of the
following points
\begin{align*}
\{(0, 0), (3, a), (3\cdot 2, a), \cdots, (3l, a+\nu((l-1)!)), \cdots,  (3n, a+\nu((n-1)!))\}
\end{align*}
in the extended plane. Let $a=2s+\eta \le 5$. Then $\al=1, (u, n)\in \{(-1, 2), (-1, 6), (0, 5)\}$ or
$\al=2, (u, n)\in \{(-1, 3), (0, 2), (-1, 11), (0, 10)\}$.
For these values of $(\al, u, n)$, we check that assertion of the Theorem \ref{1/3}
holds by using Lemma \ref{<r}. For example, when $(\al, u, n)=(2, -1, 11)$,
we find that the breaks of $NP_2(G^*)$ are given by $0<3\cdot 8<3\cdot 11$ and the minimum slope
is $\frac{3}{8}$ and the maximum slope is $\frac{4}{9}$. For $t\in \{1, 2, 3\}$, taking
$r=\lf\frac{t}{3}\rf$ in Lemma \ref{<r}, we obtain that $G_{\frac{-2}{3}}(x^3)$ does not have a factor
of degree $t$ and hence irreducible. Similarly we use Lemma \ref{<r} to get the assertion of Theorem \ref{1/3}
in the remaining cases.

Hence from now on, we assume that $a\ge 6$.
If $(0, 0)$ and $(3n, a+\nu((n-1)!))$ are the only lattice points on the Newton Polygon
$NP_2(G^*)$, then from \eqref{n}, the unique slope is
\begin{align*}
\frac{a+\nu((n-1)!)}{3n}\le \frac{2s+\eta +n-s}{3n}=\frac{1}{3}+\frac{2s+2\eta}{2\cdot 3n}=
\frac{1}{3}+\frac{a+\eta}{2\cdot 3n}\le \frac{5}{4\cdot 3}
\end{align*}
since $n\ge \frac{2^a-2}{3}\ge 2(a+1)\ge 2(a+\eta)$ for $a\ge 6$. Also the unique slope is
$>\frac{1}{3}$. Then by using Lemma \ref{<r} for $t\in \{1, 2, 3\}$ with $r=\lf\frac{t}{3}\rf$, we obtain
$G(x^3)$ is irreducible. Hence we may suppose that there is a lattice point of $NP_2(G^*)$ with $x$
co-ordinate lying in $(0, 3n)$. We prove that the breaks of $NP_2(G^*)$ are given by
$0=3n_0<3n_1<3n_2<\cdots <3n_{s-2}<3n_s=3n$ if
$(u, \al)=(-1, 1)$ and $0=3n_0<3n_1<3n_2<\cdots <3n_{s-1}<3n_s=3n$
otherwise.

First we show that $(3n_1, a+\nu((n_1-1)!))$ is a lattice point on $NP_2(G^*)$. It suffices to show
\begin{enumerate}
\item[$(i)$] $\frac{a+\nu((i-1)!)}{i}> \frac{a+\nu((n_1-1)!)}{n_1}$ for $1\le i<n_1$.
\item[$(ii)$] $\frac{a+\nu((n_l-1)!)}{n_l}>\frac{a+\nu((n_1-1)!)}{n_1}$ for $2\le l<s$.
\item[$(iii)$] $\frac{a+\nu((i-1)!)}{i}>\frac{a+\nu((n_1-l)!)}{n_1}$ for $n_l<i<n_{l+1}, 1\le l<s$.
\end{enumerate}

$(i):$ Let $1\le i<n_1=2^{a-2}$. Then from \eqref{j} and \eqref{ni},
\begin{align*}
&n_1\{a+\nu((i-1)!)\}-i\{a+\nu((n_1-1)!)\}\\
&\ge n_1\{a+i-a+2\}-i\{a+n_1-a+1\}=2n_1-i>0.
\end{align*}

$(ii):$ For $2\le l<s$, we have from \eqref{j}
\begin{align*}
\frac{a+\nu((n_l-1)!)}{n_l}-\frac{a+\nu((n_1-1)!)}{n_1}=\frac{n_l+l}{n_l}-\frac{n_1+1}{n_1}=
\frac{l}{n_l}-\frac{1}{n_1}>0
\end{align*}
since $n_l=2^\eta(2^{2(s-1)}+2^{2(s-2)}+\cdots +2^{2(s-l)})<l2^{\eta+2(s-1)}=ln_1$.

$(iii):$ Let $1\le l<s$. Write $i=n_l+j$ with $1\le j<n_{l+1}-n_l=2^{a-2l-2}$. Since $\nu(u)=\nu(n_l+u)$
for any $1\le u<n_{l+1}-n_l$, we get $\nu((i-1)!)=\nu((n_l-1)!)+\nu(n_l)+\nu((j-1)!)$. This
with \eqref{n_i}, \eqref{ni} and \eqref{j} imply
\begin{align*}
&\frac{a+\nu((i-1)!)}{i}-\frac{a+\nu((n_1-1)!)}{n_1}\ge \frac{n_l+l+j+2}{n_l+j}-\frac{n_1+1}{n_1}\\
&=\frac{1}{n_1(n_l+j)}((l+2)n_1-n_l-j))> \frac{1}{n_1(n_l+j)}\{(l+2)n_1-n_{l+1}\}>0
\end{align*}
since $n_{l+1}=2^\eta(2^{2(s-1)}+\cdots +2^{2(s-l)}+2^{2(s-l-1)})<(l+1)2^{2s+\eta-2}<(l+2)n_1$.
Hence the minimum slope is $\frac{1}{3}(1+\frac{1}{n_1})$.

Let $1\le l<s-2$. Next we show that if $(3n_l, a+\nu((n_l-1)!))$ is a lattice point on $NP_2(G^*)$, then
$(3n_{l+1}, a+\nu((n_{l+1}-1)!))$ is a lattice point on $NP_2(G^*)$. Assume that
$(3n_l, a+\nu((n_l-1)!))$ is a point on $NP_2(G^*)$. If $(3n, a+\nu((n-1)!))$ is the next
lattice point, then from \eqref{n_i}-\eqref{n}, we see that slope of the rightmost edge is
\begin{align*}
\frac{\nu((n-1)!)-\nu((n_l-1)!)}{3(n-n_l)}\le \frac{n-s-(n_l-a+l)}{3(n-n_l)}\le
\frac{1}{3}+\frac{s+\eta -l}{3(n-n_l)}\le \frac{5}{4\cdot 3}
\end{align*}
since $1\le l<s-2$ and $n-n_l\ge 2^{\eta}\frac{2^{2(s-l)}-1}{3}\ge 4(\eta+s-l)$ for $s-l\ge 3$.
Observe that $n_1>3$ and the slope of the leftmost edge is $\frac{1}{3}(1+\frac{1}{n_1})$.
We now apply Lemma \ref{<r} for $t\in \{1, 2, 3\}$ with $r=\lf\frac{t}{3}\rf$ to obtain
$G(x^3)$ is irreducible. Thus we may suppose that $(3n, a+\nu((n-1)!))$ is not the next lattice
point on $NP_2(G^*)$. To show $(3n_{l+1}, a+\nu((n_{l+1}-1)!))$ is the next lattice point on $NP_2(G^*)$, it
suffices to show
\begin{enumerate}
\item[$(iv)$] $\frac{\nu((n_u-1)!)-\nu((n_l-1)!)}{n_u-n_l}>\frac{\nu((n_{l+1}-1)!)-\nu((n_l-1)!)}{n_{l+1}-n_l}$ for $l+1<u\le s$.
\item[$(v)$] $\frac{\nu((i-1)!)-\nu((n_l-1)!)}{i-n_l}>\frac{\nu((n_{l+1}-1)!)-\nu((n_l-1)!)}{n_{l+1}-n_l}$ for $n_u<i<n_{u+1}, l\le u<s$.
\end{enumerate}
The assertion $(iv)$ follows from \eqref{ni} and by observing $(u-l)2^{2(s-l-1)+\eta}> 2^\eta(2^{2(s-l-1)}+\cdots +2^{2(s-u)})$. The assertion $(v)$ follows like $(iii)$ above by
observing that if $i=n_u+j$ with $1\le j<n_{u+1}-n_u=2^{a-2u-2}$ and
$(u-l+2)2^{2(s-l-1)+\eta}>2^\eta(2^{2(s-l-1)}+\cdots +2^{2(s-u-1)})=n_{u+1}-n_l\ge n_u+j-n_l$.

Thus we need to check for lattice points after $(3n_{s-2}, a+\nu((n_{s-2}-1)!))$ on $NP_2(G^*)$.
Recall that $n_{s-2}=n+u-2^\eta-2^{2+\eta}$ and $\nu(n-i)=\nu(\al+3(u+i))$ for $i\ge 1$.
For $(u, \al)=(-1, 1)$, we find that $n_{s-2}=n-6$ and check using $\nu(n-i)=\nu(\al+3(u+i))$ for
$i\ge 1$ that $(3n, a+\nu((n-1)!)))$ is the lattice point after $(3n_{s-2}, a+\nu((n_{s-2}-1)!))$ and
hence the maximum slope is $\frac{7}{18}$.
For $(u, \al)=(-1, 2)$, we find that $n_{s-1}=n-3$ and $(3(n-3), a+\nu((n-3)!))$ and
$(3n, a+\nu((n-1)!)))$ are the lattice points after $(3n_{s-2}, a+\nu((n_{s-2}-1)!))$
and the maximum slope is $\frac{4}{9}$. For $(u, \al)=(0, 2)$, we find that
$n_{s-1}=n-2$ and $(3(n-2), a+\nu((n-3)!))$ and $(3n, a+\nu((n-1)!)))$ are the lattice points after
$(3n_{s-2}, a+\nu((n_{s-2}-1)!))$ and the maximum slope is $\frac{1}{2}$.
For $(u, \al)=(0, 1)$, we find that $n_{s-1}=n-1$ and $(3(n-1), a+\nu((n-3)!))$ and $(3n, a+\nu((n-1)!)))$
are the lattice points after $(3n_{s-2}, a+\nu((n_{s-2}-1)!))$ and the maximum slope is $\frac{2}{3}$.
Recall that in all these cases, the slope of the leftmost edge is $\frac{1}{3}(1+\frac{1}{n_1})$.

We now use Lemma \ref{<r} for $t\in \{1, 2, 3\}$ with $r=\lf\frac{t}{3}\rf$ to obtain
that $G_{\frac{-1}{3}}(x^3)$ and $G_{\frac{-2}{3}}(x^3)$ are irreducible. Further
$G_{\frac{1}{3}}(x^3)$ does not have a factor of degree $1$ and $G_{\frac{2}{3}}(x^3)$ do not have a
factor of degree $1$ or $3$.
\qed

\section{Proof of Theorem \ref{Lag1/3}}

We first check that $L^{(\frac{1}{4})}_2(x)$ and $L^{(\frac{1}{4})}_2(x^4)$ are not irreducible
and their factorizations are given in the statement of Theorem \ref{Lag1/3}. Therefore we assume from
now on that $n\neq 2$ when $q=\frac{1}{4}$. We observe that the irreducibility of $L^{(q)}_n(x^d)$
implies the irreducibility of $L^{(q)}_n(x)$. Hence we show that $L^{(q)}_n(x^d)$ is irreducible. For
$(q, n)\in \{(-\frac{2}{3}, 2), (-\frac{1}{3}, 43), (\frac{2}{3}, 42)\}$, we check that  $L^{(q)}_n(x^3)$ are
irreducible. Thus from Theorems \ref{1/3} and \ref{1/4}, we need to consider only the following cases:
\begin{align}\label{qn}
\begin{split}
q=\frac{1}{3},  \ &1+3n=2^a \\
q=\frac{2}{3},  \ &2+3n=2^a5^b, a\ge 0, b\ge 0\\
q=-\frac{1}{4}, \ &3+4(n-1)=3^a\\
q=\frac{1}{4}, \ &1+4n=3^a5^b, a\ge 0, b\ge 0\\
q=\frac{3}{4}, \ &3+4n=7^a
\end{split}
\end{align}
Further it  suffices to show that $n!L^{(q)}_n(x^d)$ does not have a factor of degree $d$ and 
for $q\in \{\frac{1}{3}, \frac{2}{3}\}$, $n!L^{(q)}_n(x^3)$ do not have a quadratic or a cubic factor.
In fact we show that it does not have a factor of degree $\le d$. First we prove

\begin{lemma}\label{pdivn}
For $n>1$ given by \eqref{qn}, there is a prime $p|n$ such that
\begin{align*}
p\nmid d(\al+(u-1)d)(\al+ud)(\al+(u+1)d)
\end{align*}
except when $q=\frac{2}{3},  n\in \{2, 6, 10, 16\}$ and $q=\frac{1}{4},  n\in \{6, 20\}$.
\end{lemma}

\begin{proof}
Let $n>1$ be given by \eqref{qn}. Suppose that $p|n$ implies $p|d(\al+(u-1)d)(\al+ud)(\al+(u+1)d)$.

Let $q=\frac{1}{3}$. Then $p|n$ implies $p\in \{2, 3\}$. Writing $n=2^r3^s$, we have
$2^a=1+3n=1+2^r3^{1+s}$ implying $r=0$, $2^a-3^{1+s}=1$. By Lemma
\ref{nag}, we have $2^2-3=1$ or $2^a=4$ and $3^{1+s}=3$  giving $n=1$ which is not possible.

Let $q=\frac{2}{3}$. Then $p|n$ implies $p\in \{2, 3, 5\}$. Writing $n=2^r3^s5^t$, we have
$2^a5^b=2+3n=2+2^r3^{1+s}5^t$. If $a=0$, then $r=t=0$ and $5^b=2+3^{1+s}$. By Lemma
\ref{nag}, we have $5=2+3$ giving $n=1$ which is not possible. Hence $a\neq 0$. If $b=0$,
then $a>1$ giving $r=1$, $2^a=2+2\cdot 3^{1+s}5^t$ or $2^{a-1}=1+3^{1+s}5^t$. By Lemma \ref{nag},
we get solutions $2^2=1+3$ and $2^4=1+3\cdot 5$ giving $n\in \{2, 10\}$. Hence assume that
$ab\neq 0$. Then $t=0$ and $2^a5^b=2+2^r 3^{1+s}$. If $a=1$, then $2\cdot 5^b=2+2^r3^{1+s}$ or
$5^b=1+2^{r-1}3^{1+s}$. By Lemma  \ref{nag}, the solution $5^2=1+2^3\cdot 3$ gives $n=16$. Finally let $a>1$.
Then $u=1$ and we get $2^a5^b=2+2\cdot 3^{1+s}$ or $2^{a-1}5^b=1+3^{1+s}$. By Lemma \ref{nag}, its
solution $2\cdot 5=1+3^2$ gives $n=6$.

Let $q=-\frac{1}{4}$. Then $p|n$ implies $p\in \{2, 3, 5\}$. Writing $n=2^r3^s5^t$, we have
$3^a=4n-1=2^{2+r}3^s5^t-1$ implying $v=0$ and $2^{2+r}5^t-3^a=1$. By
Lemma \ref{nag}, its solution is $2^2-3^1=1$ which gives $n=1$. This is not possible.

Let $q=\frac{1}{4}$. Then $p|n$ implies $p\in \{2, 3, 5\}$. Writing $n=2^r3^s5^t$, we have
$3^a5^b=1+4n=1+2^{2+r}3^s5^t$. Let $a=0$. Then $t=0$ and $5^b=1+2^{2+r}3^s$ and by Lemma
\ref{nag}, its solutions $5=1+2^2$ and $5^2=1+2^3\cdot 3$ give $n=6$ since $n>1$. Let $b=0$. Then
$s=0, 3^a=1+2^{2+r}5^t$ and by Lemma \ref{nag}, its solutions $3^2=1+2^3$ and
$3^4=1+2^4\cdot 5$ give $n=20$ since $n\neq 2$. Finally let $ab\neq 0$. Then $s=t=0, 3^a5^b=1+2^{2+r}$
and by Lemma \ref{nag}, there are no solutions.

Let $q=\frac{3}{4}$. Then $p|n$ implies $p\in \{2, 3, 7\}$. Writing $n=2^r3^s7^t$, we have
$7^a=3+4n=3+2^{2+r}3^s7^t$ implying $s=t=0$ and $7^a=3+2^{2+r}$. By Lemma \ref{nag}, its solution
$7=3+2^2$ imply $n=1$ which is not possible.
\end{proof}

For $n\in \{2, 6, 10, 16\}$ if $q=\frac{2}{3}$ and $n\in \{6, 20\}$ if $q=\frac{1}{4}$, we check that
$L^{(q)}_n(x^d)$ are irreducible. Hence we may suppose that
$n\notin \{2, 6, 10, 16\}$ if $q=\frac{2}{3}$ and $n\notin \{2, 6, 20\}$ if $q=\frac{1}{4}$. Then by
Lemma \ref{pdivn},  we find that there is a prime $p|n$ such that
$p\nmid d(\al+(u-1)d)(\al+ud)(\al+(u+1)d)$. Let $p$ be largest with this property.
Thus we always have $p\ge 5>d$. 
We use Corollary \ref{1/k} with $k=d, l=0$. Since $p|\binom{n}{j}$ for $1\le j<p$ and
$p|\prod^p_{i=1}(\al+(u+i)d)$, the conditions of  Corollary \ref{1/k} are satisfied. It suffices to show
\begin{align*}
\nu_p\left(\prod^j_{i=0}(\al+(u+i)d)\right)-\nu_p\left(\binom{n}{j}\right)<\frac{dj}{d}=j \ \ {\rm for} \ 1\le j\le n.
\end{align*}
Observe that $p$ divides at most one of $\al+(u+i)d$ when $1\le i<p$ and $\al+(u+p-1)d<pd<p^2$. By using
$p|\binom{n}{j}$ for $1\le j<p$, we obtain that the left hand side of above inequality is $\le 0$ for $1\le j<p$ and
hence the assertion follows for $1\le j<p$. Let $j\ge p$. Then there is at least one multiple of $p$ dividing
$(\al+(u+j)d)!$ but not dividing $\prod^j_{i=0}(\al+(u+i)d)$. Therefore by using Lemma \ref{m!}, we obtain
\begin{align*}
\nu_p\left(\prod^j_{i=0}(\al+(u+i)d)\right)-\nu_p\left(\binom{n}{j}\right)&\le \nu_p((\al+(u+j)d)!)-1\\
&\le \frac{\al+(u+j)d-1}{p-1}-1 \le u+j+\frac{\al-1}{p-1}-1<j
\end{align*}
by using Lemma \ref{m!} and since $p>d>\al$.
\qed

\section{Acknowledgments}

We would like to sincerely thank the referee for careful reading of our manuscript and for pointing out inaccuracies in an earlier draft of this paper and making suggestions in the exposition. 
The first author was supported in part by the DST Project SR/FTP/MS-035/2012.
The second author would like to thank the Max-Planck Institute of Mathematics, Bonn for
its hospitality where a part of this work was done in his visit during May-July 2014.


\begin{thebibliography}{100}
\bibitem[Fil95]{Filbes} M. Filaseta, \emph{The irreducibility of all but finitely many Bessel polynomials},
Acta Math., {\bf 174} (1995), 383--397.
\bibitem[Hasse]{hasse} H. Hasse, Number theory. Classics in Mathematics, Springer-Verlag, Berlin, 2002.
\bibitem[LaSh12]{GHL1} S. Laishram and T. N. Shorey, \emph{
Irreducibility of generalized Hermite-Laguerre Polynomials}, Functiones et Approximatio,
{\bf 47} (2012), 51-64.
\bibitem[LaSh09]{GHL2} S. Laishram and T. N. Shorey, \emph{Irreducibility of generalized Hermite-Laguerre
Polynomials II}, Indag. Math., {\bf 20} (4) (2009), 427--434.
\bibitem[LaSh06a]{LS>2k} S. Laishram and T. N. Shorey, \emph{The greatest prime divisor of a product of terms in an
arithmetic progression}, Indag. Math., {\bf 17}(3), (2006), 425--436.
\bibitem[Leh64]{lehmer} D. H. Lehmer, \emph{On a problem of St\H{o}rmer},
Illinois J. of Math., {\bf 8} (1964), 57--79.
\bibitem[Mih04]{mihai} P. Mihailescu, \emph{Primary cyclotomic units and a proof of Catalan's conjecture}, 
J. Reine Angew. Math., {\bf 572} (2004), 167--195. 
\bibitem[Nag58]{nagell} T. Nagell, \emph{Sur une classe d'\'equations exponentielles},
Ark. Mat. {\bf 3} (1958), 569--582.
\bibitem[Naj10]{najman} F. Najman, \emph{Smooth values of some quadratic polynomials},
 Glasnik Mat. Ser III {\bf 45} (2010), 347--355. The table of vlaues $x$ such that the
 polynomial $x^2-4$ is smooth available at $http://web.math.pmf.unizg.hr/\tilde fnajman/rezminus4.html$
\bibitem[Sch29]{schur} I. Schur, \emph{Einige S$\ddot{a}$tze $\ddot{u}$ber Primzahlen
mit Anwendungen auf Irreduzibilit$\ddot{a}$tsfragen, II}, Sitzungsber. Preuss. Akad.
Wiss. Berlin Phys.-Math. Kl., {\bf 14} (1929), 370--391.
\bibitem[Sch31]{schur1} I. Schur, \emph{Affektlose Gleichungen in der Theorie der
Laguerreschen und Hermitschen Polynome}, J. Reine Angew. Math., {\bf 165} (1931), 52--58.
\bibitem[ShTi10]{stirred} T. N. Shorey and R. Tijdeman, \emph{Generalizations
of some irreducibility results by Schur}, Acta Arith. {\bf 145} (2010), 341--371.
\end{thebibliography}
\end{document}